\DeclareMathOperator{\grad}{grad}
\DeclareMathOperator{\hess}{hess}
\newtheorem{theorem}{Theorem}[section]
\newtheorem{lemma}[theorem]{Lemma}
\newcommand{\bi}{\begin{itemize}}
\newcommand{\ei}{\end{itemize}}
\newcommand{\ba}{\begin{array}}
\newcommand{\ea}{\end{array}}
\begin{document}

\title{
A Riemannian AdaGrad-Norm Method}


\author{Glaydston C. Bento and Geovani N. Grapiglia and Mauricio S. Louzeiro and Daoping Zhang}

\author{  Glaydston de C. Bento  \thanks{Universidade Federal de Goi\'as, IME, Avenida Esperan\c{c}a, s/n, Campus Samambaia, CEP 74690-900, Goi\^ania, GO, Brazil. e-mail: {\tt glaydston@ufg.br}.}
\and
Geovani N. Grapiglia \thanks{Université Catholique de Louvain, ICTEAM/INMA, Avenue Georges Lema\^{\i}tre, 4-6/L4.05.01, B-1348, Louvain-la-Neuve, Belgium. e-mail: {\tt geovani.grapiglia@uclouvain.be}.}
 \and
Mauricio S. Louzeiro\thanks{Universidade Federal de Goi\'as, IME, Avenida Esperan\c{c}a, s/n, Campus Samambaia, CEP 74690-900, Goi\^ania, GO, Brazil. e-mail: {\tt mauriciolouzeiro@ufg.br}. }
\and
Daoping Zhang\thanks{Nankai University, School of Mathematical Sciences and LPMC, Tianjin 300071, China. e-mail: \tt{daopingzhang@nankai.edu.cn}.}
}

\maketitle

\begin{abstract}
We propose a manifold AdaGrad-Norm method (\textsc{MAdaGrad}), which extends the norm version of AdaGrad (AdaGrad-Norm) to Riemannian optimization.
 In contrast to line-search schemes, which may require several exponential map computations per iteration, \textsc{MAdaGrad} requires only one. Assuming the objective function $f$ has Lipschitz continuous Riemannian gradient, we show that the method requires at most $\mathcal{O}(\varepsilon^{-2})$ iterations to compute a point $x$ such that $\|\operatorname{grad} f(x)\|\leq \varepsilon$. Under the additional assumptions that $f$ is geodesically convex and the manifold has sectional curvature bounded from below, we show that the method takes at most $\mathcal{O}(\varepsilon^{-1})$ to find $x$ such that $f(x)-f_{low}\leq\epsilon$, where $f_{low}$ is the optimal value. Moreover, if $f$ satisfies the Polyak--\L{}ojasiewicz condition globally on the manifold, we establish a complexity bound of $\mathcal{O}(\log(\varepsilon^{-1}))$, provided that the norm of the initial Riemannian gradient is sufficiently large. For the manifold of symmetric positive definite matrices, we construct a family of nonconvex functions satisfying the PL condition. Numerical experiments illustrate the remarkable performance of \textsc{MAdaGrad} in comparison with Riemannian Steepest Descent equipped with Armijo line-search.

\end{abstract}

\noindent{\bf Key words:} Riemannian Optimization · Gradient Method · Adaptive Methods
· Worst-Case · Complexity Bounds\\
\noindent{\bf AMS subject classification:} 65K05  · 68Q25  ·90C30  · 49M37

\maketitle

\section{Introduction}

\subsection{Motivation and Contributions}

In this work we consider the minimization of a differentiable function $f:M\to\mathbb{R}$, where $M$ is a Riemannian manifold \cite{AbsilMahonySepulchre2008,Hu,boumal2023introduction}. Problems of this type appear in many important applications such as Low-Rank Matrix Completion \cite{Vandereycken}, Dictionary Learning \cite{Cherian} and Independent Component Analysis \cite{Selvan}. Many optimization algorithms originally developed for the Euclidean setting (\(M = \mathbb{R}^n\)) have been extended to Riemannian optimization. Notable examples include variants of the gradient method \cite{Smith1994,cruz1998geodesic}, Newton and conjugate gradient methods \cite{Smith1994}, quasi-Newton methods \cite{Huang}, trust-region methods \cite{Absil}, and cubic regularization of Newton’s method \cite{Agarwal}, among others. Special attention has been devoted to adaptive methods, which automatically select suitable stepsizes, trust-region radii, or regularization parameters without requiring prior knowledge of the problem-specific constants.
 
A classical example of an adaptive scheme is the gradient method with Armijo line search, which defines the iterates as  
\[
x_{k+1} = \exp_{x_{k}}\left(-\alpha_k \omega^{\ell_k} \operatorname{grad} f(x_{k})\right),
\]  
where \(\ell_k\) is the smallest nonnegative integer \(\ell\) such that  
\begin{equation}
f\left(\exp_{x_{k}}\left(-\alpha_k \omega^{\ell} \operatorname{grad} f(x_{k})\right)\right) \leq f(x_{k}) - \rho \alpha_k \omega^{\ell} \|\operatorname{grad} f(x_{k})\|^2,
\label{eq:armijo}
\end{equation}  
with \(\rho, \omega\in (0,1)\) and \(\alpha_0 > 0\) being user-defined parameters. In practice, the values \(\ell = 0, 1, 2, \dots\) are tested sequentially until inequality~\eqref{eq:armijo} is satisfied. This backtracking procedure may require multiple evaluations of the exponential map \(\exp_{x_{k}}(\cdot)\), which can make the method computationally expensive. To mitigate this issue, the RWNGrad method was recently proposed in \cite{grapiglia2023adaptive} as a Riemannian counterpart of the WNGrad method, originally developed for Euclidean optimization in \cite{ward2020adagrad}. Specifically, RWNGrad sets 
\begin{equation}
\left\{\begin{array}{rcl} x_{k+1}&=&\text{exp}_{x_{k}}\left(-\dfrac{1}{\beta_{k}}\text{grad}f(x_{k})\right),\quad \text{$\beta_{0}>0$}\\
\beta_{k+1}&=&\beta_{k}+\dfrac{\|\text{grad}f(x_{k})\|^{2}}{\beta_{k}},
\end{array}
\right.
\label{eq:rwngrad}
\end{equation}
thus requiring a single evaluation of the exponential map at each iteration. It was proved in \cite{grapiglia2023adaptive} that RWNGrad needs no more than $\mathcal{O}\left(\epsilon^{-2}\right)$ iterations to find $x_{k}$ such that $\|\text{grad}f(x_{k})\|\leq\epsilon$. More importantly, numerical experiments showed that RWNGrad is significantly faster than the Gradient Method with Armijo line search on problems over the manifold $M=\mathbb{P}^{n}_{++}$ of $(n\times n)$ symmetric and positive definite (SPD) matrices.

Motivated by the encouraging numerical performance of RWNGrad, in this paper we investigate a related yet distinct adaptive strategy, aiming to obtain a Riemannian algorithm with improved numerical performance and stronger theoretical guarantees compared to RWNGrad. Specifically, we propose MAdaGrad (Manifold AdaGrad-Norm), a Riemannian extension of the AdaGrad-Norm method \cite{ward2020adagrad}. In contrast to RWNGrad~\eqref{eq:rwngrad}, MAdaGrad sets
\begin{equation}
\left\{\begin{array}{rcl} 
\beta_{k+1}&=&\beta_{k}+\|\text{grad}f(x_{k})\|^{2},\quad \text{$\beta_{0}=0$,}\\
x_{k+1}&=&\text{exp}_{x_{k}}\left(-\dfrac{\eta}{\sqrt{\beta_{k+1}}}\text{grad}f(x_{k})\right),
\end{array}
\right.
\label{eq:MAdaGrad}
\end{equation}
with $\eta>0$ being a user-defined parameter. Regarding the theoretical guarantees of MAdaGrad~\eqref{eq:MAdaGrad}, we establish iteration-complexity bounds under various assumptions. When the objective function \( f(\cdot) \) is nonconvex, the method achieves a complexity of \( \mathcal{O}(\epsilon^{-2}) \). This rate improves to \( \mathcal{O}(\epsilon^{-1}) \) when \( f(\cdot) \) is convex and the manifold \( M \) has sectional curvature bounded below by a negative constant, or when \( f(\cdot) \) is possibly nonconvex but satisfies the Polyak--Łojasiewicz (PL) condition globally. In addition, we provide a family of nonconvex functions over the manifold of symmetric positive definite matrices \( \mathbb{P}^{n}_{++} \) that satisfy the PL condition. Finally, our numerical experiments demonstrate that MAdaGrad can significantly outperform both RWNGrad and the gradient method with Armijo line search.

\subsection{Related Literature}

In recent years, the challenge of training machine learning models has motivated the development and analysis of numerous adaptive variants of the Stochastic Gradient Descent (SGD) method, including AdaGrad \cite{duchi2011adaptive}, RMSProp \cite{tieleman2012rmsprop}, Adam \cite{kingma2015adam}, and AMSGrad \cite{reddi2018convergence}. A key feature of these methods is the use of distinct stepsizes for updating each component of the iterate. For example, the batch version of AdaGrad applied to the minimization of $f:\mathbb{R}^{n}\to\mathbb{R}$ defines the iterates by
\begin{equation*}
\left\{\begin{array}{rcl} \beta_{k+1}&=&\beta_{k}+\nabla f(x_{k})\odot\nabla f(x_{k}),\quad \beta_{k}=0\in\mathbb{R}^{n},\\
& &\\
\left[x_{k+1}\right]_{i}&=&\left[x_{k}\right]_{i}+\frac{\eta}{\sqrt{\left[\beta_{k+1}\right]_{i}}}[\nabla f(x_{k})]_{i},\quad i=1,\ldots,n,
\end{array}
\right.
\end{equation*}
where $\left[\nabla f(x_{k})\odot\nabla f(x_{k})\right]_{i}=\left[\nabla f(x_{k})\right]_{i}^{2}$ for $i=1,\ldots,n$. The component-wise nature of adaptive methods such as AdaGrad complicates their extension to the Riemannian setting, due to the lack of intrinsic coordinate systems on general manifolds. In \cite{Becigneul}, this issue was addressed by considering the special case where $M$ is a Cartesian product of Riemannian manifolds. By exploiting this additional structure, the authors proposed Riemannian extensions of AdaGrad, Adam, and AMSGrad. A different approach was considered in \cite{Sakai}, where the authors presented a Riemannian adaptive method that encompasses extensions of RMSProp, Adam, and AMSGrad for the case where $M$ is an embedded submanifold of Euclidean space. Therefore, given the component-wise nature of this class of adaptive methods, their generalization to the Riemannian setting typically requires additional assumptions on the manifold $M$, which restricts their applicability. For this reason, in the present work we focus on generalizing AdaGrad-Norm \cite{ward2020adagrad}, which, similar to WNGrad \cite{wu2018wngrad}, does not rely on a coordinate system and can thus be extended to general Riemannian optimization problems.

\subsection{Contents}

The remainder of the paper is organized as follows. In Section~\ref{sec:preliminaries}, we introduce the necessary concepts and notations from Riemannian geometry. Section~\ref{sec:algorithm} presents the MAdaGrad algorithm and establishes key auxiliary results concerning its iterates and step sizes. In Section~\ref{sec:complexity}, we derive iteration-complexity bounds for the nonconvex, convex, and PL cases. We also provide a family of nonconvex functions that satisfy the PL property globally. Finally, in Section~\ref{sec:numerics} we report numerical results.

\section{Preliminary}
\label{sec:preliminaries}
In this section, we recall  some  concepts, notations, and basic results  about Riemannian manifolds.   For more details   see, for example, \cite{doCarmo1992,Sakai1996,UdristeLivro1994,Rapcsak1997}.

 We denote by $T_pM$ the {\it tangent space} of a Riemannian manifold $M$ at $p$.  The corresponding norm associated to the Riemannian metric $\langle \cdot ~, ~ \cdot \rangle$ is denoted by $\|  ~\cdot~ \|$. We use $\ell(\gamma)$ to denote the length of a piecewise smooth curve $\gamma:[a,b]\rightarrow M$. The Riemannian distance between $p$ and $q$ in a finite-dimensional Riemannian manifold $M$ is denoted by $d(p,q)$. This distance induces the original topology on $M$, so that $(M,d)$ becomes a complete metric space.
 Let $( N,  \langle \cdot ~, ~ \cdot \rangle)$ and $(M,   \langle\!\!\langle \cdot ~, ~ \cdot \rangle\!\!\rangle)$ be Riemannian manifolds and $\Phi : N \to M$ be an isometry, that is, $\Phi$  is $C^{\infty}$, and for all $q \in N$ and $u, v \in T_qN$, we have $\langle u , v \rangle=\langle\!\!\langle d\Phi_q u , d\Phi_q v \rangle\!\!\rangle $, where $d\Phi_q : T_{q}N \to T_{\Phi(q)}M$ is the differential of $\Phi$ at $q\in N$.  One can verify that $\Phi$  preserves geodesics, that is, $\beta$ is a geodesic in $N$ if and only if $\Phi\circ \beta$ is a geodesic in $M$.   Denote by ${\cal X}(M)$, the space of smooth vector fields on $M$. Let $\nabla$ be the Levi-Civita connection associated to $(M, \langle \cdot ~, ~ \cdot \rangle)$. The Riemannian metric induces a mapping  $f\mapsto\grad f $ that associates  each   differentiable function to its {\it gradient} via the rule $\langle\grad f,X\rangle=d f(X)$, for all $ X\in{\cal X}(M)$.      A vector field $V$ along $\gamma$ is said to be {\it parallel} iff $\nabla_{\gamma^{\prime}} V=0$. If $\gamma^{\prime}$ itself is parallel, we say that $\gamma$ is a {\it geodesic}. Given that the geodesic equation $\nabla_{\ \gamma^{\prime}} \gamma^{\prime}=0$ is a second order nonlinear ordinary differential equation, then the geodesic $\gamma=\gamma _{v}( \cdot ,p)$ is determined by its position $p$ and velocity $v$ at $p$. It is easy to check that $\|\gamma ^{\prime}\|$ is constant. The restriction of a geodesic to a  closed bounded interval is called a {\it geodesic segment}. A geodesic segment joining $p$ to $q$ in $ M$ is said to be {\it minimal} if its length is equal to $d(p,q)$. For each $t \in [a,b]$, $\nabla$ induces an isometry, relative to $ \langle \cdot , \cdot \rangle  $, $P_{\gamma,a,t} \colon T _{\gamma(a)} {M} \to T _
{\gamma(t)} {M}$ defined by $ P_{\gamma,a,t}\, v = V(t)$, where $V$ is the unique vector field on $\gamma$ such that
$ \nabla_{\gamma'(t)}V(t) = 0$ and $V(a)=v$, the so-called {\it parallel transport} along  the geodesic segment   $\gamma$ joining  $\gamma(a)$ to $\gamma(t)$.  When there is no confusion, we consider the notation $P_{\gamma,p,q}$  for the parallel transport along  the geodesic segment  $\gamma$ joining  $p$ to $q$.  A Riemannian manifold is {\it complete} if the geodesics are defined for any values of $t\in \mathbb{R}$. Hopf-Rinow's theorem asserts that any pair of points in a  complete Riemannian  manifold $M$ can be joined by a (not necessarily unique) minimal geodesic segment. A set  $\Omega\subseteq M$ is said to be {\it convex}  iff any geodesic segment with end points in $\Omega$ is contained in $\Omega$.  A function $f:M\to\mathbb{R}$  is said to be  {\it convex} on a convex set $\Omega $ iff for any geodesic segment $\gamma:[a, b]\to\Omega$, the composition $f\circ\gamma:[a, b]\to\mathbb{R}$ is convex.
Owing to  the completeness of the Riemannian manifold $M$, the {\it exponential map} $\mbox{exp}_{p}:T_{p}  M \to M $ can be  given by $\exp_{p}v\,=\, \gamma _{v}(1,p)$, for each $p\in M$. A complete, simply connected Riemannian manifold of non-positive sectional curvature is called a {\it Hadamard manifold}.  For all $p\in M$, the  exponential map $\mbox{exp}_{p}:T_{p}  M \to M $  is  a diffeomorphism and $\mbox{exp}^{-1}_{p}:M\to T_{p}M$ denotes its inverse. In this case, $d({q}\, , \, p)\,=\,||\mbox{exp}^{-1}_{p}q||$ and   the function $d_{q}^2: M\to\mathbb{R}$ defined by $ d_{q}^2(p):=d^2(q,p)$ is  $C^{\infty}$ and $ \grad d_{q}^2(p):=-2\mbox{exp}^{-1}_{p}{q}.$

 {\it In this paper, all manifolds are assumed to be  connected,   finite dimensional, and complete.}
 
\section{A Riemannian AdaGrad-Norm Method}
\label{sec:algorithm}

Consider the problem
\begin{equation}
\min_{x\in M}\,f(x),
\label{eq:25}
\end{equation}
where $M$ is a Riemannian manifold and $f\colon M\to\mathbb{R}$ is a differentiable function. 
As introduced in \cite{cruz1998geodesic}, given $L\geq 0$, the gradient vector fields $\grad f$ is said to be $L$-Lipschitz continuous if,   for any  points $ p$ and $q\in M$ and $\gamma$,  a  geodesic  segment joining $p$ to $q$, one has
$\left\|P_{\gamma,p,q} \mbox{grad} f(p)- \mbox{grad} f(q)\right\|\leq L d(p, q).$ 
\\[0.2cm]
\noindent Let us assume that:
\vspace{0.2cm}
\begin{mdframed}
\begin{itemize}
\item[\textbf{A1.}] $\grad f$ is $L$-Lipschitz continuous;
\item[\textbf{A2.}] $f$ has a global minimizer, with optimal value denoted by $f^{*}$.
\end{itemize}
\end{mdframed}

\newpage

\noindent Below we propose a Riemannian generalization of the batch version of method AdaGrad-Norm \cite{ward2020adagrad,xie2020linear}.
\begin{mdframed}
\noindent\textbf{Algorithm 1.} Riemannian AdaGrad-Norm (MAdaGrad).
\\[0.2cm]
\noindent\textbf{Step 0.} Given $x_{0}\in M$ and $\eta>0$, set $\beta_{0}=0$ and $k:=0$. 
\\[0.2cm]
\noindent\textbf{Step 1.} If $\grad f(x_{k})=0$, then {\bf stop}; otherwise, compute
\begin{align}
\beta_{k+1}&= \beta_{k}+\|\text{grad}f(x_{k})\|^{2},\label{eq:r1} \\
\alpha_{k}&=\frac{\eta}{\sqrt{\beta_{k+1}}},\label{eq:r2}\\
x_{k+1}&=\text{exp}_{x_{k}}\left(-\alpha_{k}\text{grad} f(x_{k})\right).\label{eq:r3}
\end{align}
\noindent\textbf{Step 2.} Set $k:=k+1$ and go to Step 1.
\end{mdframed}
The next lemma is a consequence of \cite[Lemma 5.1]{cruz1998geodesic} and has appeared in \cite{bento2017iteration}.
\begin{lemma}
Suppose that A1 holds. Then, 
\begin{equation}\label{eq:1001}
f( \exp_{p}v) \leq f(p) + \langle {\emph{grad}} f(p), v\rangle+\frac{L}{2}\left\|v\right\|^{2}, \qquad  p\in M, \quad v\in T_pM .
\end{equation}
\end{lemma}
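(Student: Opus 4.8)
The plan is to reduce the inequality to a one-dimensional estimate along the geodesic emanating from $p$ with velocity $v$, and then apply the fundamental theorem of calculus together with A1. Let $\gamma:[0,1]\to M$ be the geodesic with $\gamma(0)=p$ and $\gamma'(0)=v$, so that $\gamma(1)=\exp_p v$, and set $h(t):=f(\gamma(t))$. Since $h'(t)=\inner{\grad f(\gamma(t))}{\gamma'(t)}$, integrating over $[0,1]$ gives
\begin{equation}
f(\exp_p v)-f(p)=\int_0^1 \inner{\grad f(\gamma(t))}{\gamma'(t)}\,dt.
\end{equation}

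The key structural fact is that, $\gamma$ being a geodesic, its velocity field is parallel along $\gamma$; hence $\gamma'(t)=P_{\gamma,0,t}v$, and in particular $\norm{\gamma'(t)}=\norm{v}$ is constant, so the segment $\gamma|_{[0,t]}$ has length $t\norm{v}$ and $d(p,\gamma(t))\leq t\norm{v}$. Using that $P_{\gamma,0,t}$ is an isometry, I would rewrite the integrand as $\inner{P_{\gamma,t,0}\grad f(\gamma(t))}{v}$, where $P_{\gamma,t,0}$ is the transport back to $T_pM$. Subtracting $\inner{\grad f(p)}{v}=\int_0^1\inner{\grad f(p)}{v}\,dt$ then yields
\begin{equation}
f(\exp_p v)-f(p)-\inner{\grad f(p)}{v}=\int_0^1 \inner{P_{\gamma,t,0}\grad f(\gamma(t))-\grad f(p)}{v}\,dt.
\end{equation}

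To close the argument I would bound the integrand by Cauchy--Schwarz and invoke A1 on the segment $\gamma|_{[0,t]}$ joining $p$ to $\gamma(t)$. Because parallel transport preserves norms, the transported residual satisfies $\Norm{P_{\gamma,t,0}\grad f(\gamma(t))-\grad f(p)}=\Norm{P_{\gamma,0,t}\grad f(p)-\grad f(\gamma(t))}\leq L\,d(p,\gamma(t))\leq Lt\norm{v}$, so the integrand is at most $Lt\norm{v}^2$. Integrating, $\int_0^1 Lt\norm{v}^2\,dt=\tfrac{L}{2}\norm{v}^2$, which is exactly the claimed bound.

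The main obstacle is the parallel-transport bookkeeping rather than any deep estimate: one must keep careful track of which tangent space each gradient lives in, check that A1 --- stated with transport from $p$ to $q$ --- is equivalent, via the isometry property, to the form with transport from $\gamma(t)$ back to $p$ used above, and justify the speed and length identities that give $d(p,\gamma(t))\leq t\norm{v}$. Once these geometric points are settled, the remainder is the standard one-dimensional descent-lemma computation.
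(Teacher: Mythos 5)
Your proof is correct. There is, however, nothing in the paper to compare it against line by line: the paper does not prove this lemma at all, but simply notes that it is a consequence of \cite[Lemma 5.1]{cruz1998geodesic} and has appeared in \cite{bento2017iteration}. Your argument is the standard self-contained derivation that those references rely on: parametrize by the geodesic $\gamma(t)=\exp_p(tv)$, write $f(\exp_pv)-f(p)=\int_0^1\inner{\grad f(\gamma(t))}{\gamma'(t)}\,dt$, use the parallelism of $\gamma'$ and the isometry of parallel transport to pull everything back to $T_pM$, and bound the residual via A1 and $d(p,\gamma(t))\le t\norm{v}$. The bookkeeping issues you flag are all handled correctly: A1 as stated in the paper applies to \emph{any} geodesic segment joining two points, in particular to $\gamma|_{[0,t]}$ joining $p$ to $\gamma(t)$; the identity $\Norm{P_{\gamma,t,0}\grad f(\gamma(t))-\grad f(p)}=\Norm{\grad f(\gamma(t))-P_{\gamma,0,t}\grad f(p)}$ is exactly the statement that $P_{\gamma,0,t}$ is a linear isometry with inverse $P_{\gamma,t,0}$; and the completeness assumption made throughout the paper guarantees $\gamma$ is defined on all of $[0,1]$, so $\exp_p v$ makes sense for every $v\in T_pM$. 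What your route buys is self-containedness---a reader of the paper must chase the citation to see why the descent inequality holds, whereas your argument supplies the full proof---at the cost of roughly half a page of parallel-transport manipulation that the paper chose to outsource.
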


\begin{lemma}\label{le:rlc}
Suppose that A1 holds and let $\left\{x_{k}\right\}_{k\geq 0}$ be generated by Algorithm 1. Then, the following hold:
\begin{enumerate}
\item [i)] $f(x_{k+1})\leq f(x_{k})+\dfrac{L\alpha_{k}^{2}}{2}(\beta_{k+1}-\beta_{k})$ for all $ k\geq 0$;
\item [ii)] if $\alpha_{k}\leq 1/L$ for some $k\geq 0$, then $f(x_{k})-f(x_{k+1})\geq\dfrac{\alpha_{k}}{2}\|\emph{grad} f(x_{k})\|^{2}$;
\item [iii)] if $\beta_{k+1}<\eta^{2}L^{2}$ for all $k=0,\ldots,k_{0}-1$, where $k_{0}\geq 1$, then
\begin{equation}
\alpha_{k}>\dfrac{1}{L},\quad\,\,k=0,\ldots,k_{0}-1,
\label{eq:9.1}
\end{equation}
\begin{equation}
\sum_{k=0}^{k_{0}-1}\alpha_{k}\|\emph{grad} f(x_{k})\|^{2}\leq\dfrac{\eta^{3}L^{2}}{\|\emph{grad} f(x_{0})\|},
\label{eq:10.1}
\end{equation}
\begin{equation}
f(x_{k_{0}})\leq f(x_{0})+\dfrac{\eta^{4}L^{3}}{2\|\emph{grad} f(x_{0})\|^{2}}.
\label{eq:extra1.1}
\end{equation}
\end{enumerate}
\end{lemma}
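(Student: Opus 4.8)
The plan is to derive all three items from the single descent inequality \eqref{eq:1001}, specialized to the MAdaGrad step $v=-\alpha_k\grad f(x_k)$. Substituting this $v$ with $p=x_k$ into \eqref{eq:1001} gives the workhorse estimate
\[
f(x_{k+1})\leq f(x_k)-\alpha_k\|\grad f(x_k)\|^2+\frac{L\alpha_k^2}{2}\|\grad f(x_k)\|^2,
\]
and, since $\|\grad f(x_k)\|^2=\beta_{k+1}-\beta_k$ by \eqref{eq:r1}, every claim reduces to reading this inequality in the right way.

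For item i) I would observe that the cross term $-\alpha_k\|\grad f(x_k)\|^2=-\alpha_k(\beta_{k+1}-\beta_k)$ is nonpositive (because $\alpha_k>0$ and $\{\beta_k\}$ is nondecreasing) and simply discard it, leaving $f(x_{k+1})\leq f(x_k)+\tfrac{L\alpha_k^2}{2}(\beta_{k+1}-\beta_k)$. For item ii) I would instead retain the cross term, factor the right-hand side as $f(x_k)-\alpha_k\|\grad f(x_k)\|^2\bigl(1-L\alpha_k/2\bigr)$, and use the hypothesis $\alpha_k\leq 1/L$ to bound $1-L\alpha_k/2\geq 1/2$, which yields the stated descent of at least $\tfrac{\alpha_k}{2}\|\grad f(x_k)\|^2$.

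Item iii) is where the real work lies. The bound \eqref{eq:9.1} is immediate: from $\alpha_k=\eta/\sqrt{\beta_{k+1}}$ and $\beta_{k+1}<\eta^2L^2$ one gets $\sqrt{\beta_{k+1}}<\eta L$, hence $\alpha_k>1/L$. For \eqref{eq:10.1} the key observation is that $\{\beta_k\}$ is nondecreasing, so $\beta_{k+1}\geq\beta_1=\|\grad f(x_0)\|^2$ (recall $\beta_0=0$) for every $k$; this yields the uniform step-size bound $\alpha_k\leq\eta/\|\grad f(x_0)\|$. Pulling this constant out of the sum, telescoping $\sum_{k=0}^{k_0-1}\|\grad f(x_k)\|^2=\beta_{k_0}$, and invoking $\beta_{k_0}<\eta^2L^2$ (the hypothesis at index $k_0-1$) produces exactly $\eta^3L^2/\|\grad f(x_0)\|$. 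Finally, for \eqref{eq:extra1.1} I would sum the item-i) inequality over $k=0,\dots,k_0-1$ to telescope the function values, then bound $\sum\alpha_k^2(\beta_{k+1}-\beta_k)$ by factoring out one copy of $\alpha_k\leq\eta/\|\grad f(x_0)\|$ and recognizing the remaining sum $\sum\alpha_k(\beta_{k+1}-\beta_k)=\sum\alpha_k\|\grad f(x_k)\|^2$ as the left-hand side of \eqref{eq:10.1}, giving $\tfrac{L}{2}\cdot\eta^4L^2/\|\grad f(x_0)\|^2$.

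The main obstacle is choosing how to estimate the sums in iii). The natural AdaGrad-style bound $\sum(\beta_{k+1}-\beta_k)/\sqrt{\beta_{k+1}}\leq 2\sqrt{\beta_{k_0}}<2\eta L$ via an integral comparison is valid but produces a differently shaped constant ($2\eta^2L$) that does not match \eqref{eq:10.1}. The cleaner route that reproduces the stated constants is to exploit monotonicity of $\{\beta_k\}$ for the uniform bound $\alpha_k\leq\eta/\|\grad f(x_0)\|$ and then telescope $\{\beta_k\}$ directly. The one piece of nonobvious bookkeeping is to realize that \eqref{eq:extra1.1} should reuse \eqref{eq:10.1} rather than be estimated from scratch.
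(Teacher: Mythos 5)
Your proposal is correct and follows essentially the same route as the paper: the same descent inequality from \eqref{eq:1001}, the same handling of items i) and ii), and the same use of $\beta_{k+1}\geq\beta_1=\|\grad f(x_0)\|^2$ together with the telescoping of $\{\beta_k\}$ and the bound $\beta_{k_0}<\eta^2L^2$ for item iii). The only (inconsequential) difference is in \eqref{eq:extra1.1}, where you reuse \eqref{eq:10.1} after factoring $\alpha_k^2=\alpha_k\cdot\alpha_k$, whereas the paper writes $\alpha_k^2=\eta^2/\beta_{k+1}$ and telescopes directly; both rest on the same two estimates and yield the identical constant.
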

\begin{proof}
By using  \eqref{eq:1001} with $p = x_{k}$ and $v = -\alpha_k \grad f(x_{k})$, and  \eqref{eq:r3}, we obtain
\begin{eqnarray}\label{eq:53}
f(x_{k+1})&\leq & f(x_{k})-\alpha_{k}\|\mbox{grad} f(x_{k})\|^{2}+\dfrac{L\alpha_{k}^{2}}{2}\|\mbox{grad} f(x_{k})\|^{2}\nonumber\\
&\leq & f(x_{k})+\dfrac{L\alpha_{k}^{2}}{2}\|\mbox{grad} f(x_{k})\|^{2}
\label{eq:53}
\end{eqnarray}
for all $k\geq 0$.  The proof of item $i)$  follows by combining the last inequality in \eqref{eq:53} with \eqref{eq:r1}, and the proof of item $ii)$ follows by using the first inequality in \eqref{eq:53} along with the fact that $-L\alpha_k\geq -1$ for some $k\geq 0$. Now, let us assume that $\beta_{k+1}<\eta^{2}L^{2}$ for $k=0,\ldots,k_{0}-1$, for some $k_{0}\geq 1$. Consequently, we have $\sqrt{\beta_{k+1}}<\eta L$ for $k=0,\ldots,k_{0}-1$, and the proof of  \eqref{eq:9.1} is an immediate consequence of  \eqref{eq:r2}. 
On the other hand, by using  \eqref{eq:r2}  again, we obtain
\begin{eqnarray}
\sum_{k=0}^{k_{0}-1}\alpha_{k}\|\mbox{grad} f(x_{k})\|^{2}&=&\sum_{k=0}^{k_{0}-1}\dfrac{\eta}{\sqrt{\beta_{k+1}}}\|\mbox{grad} f(x_{k})\|^{2}\nonumber\\
&\leq &\dfrac{\eta}{\|\mbox{grad} f(x_{0})\|}\sum_{k=0}^{k_{0}-1}\|\mbox{grad} f(x_{k})\|^{2},
\label{eq:53.1}
\end{eqnarray}
where the last inequality follows from the fact that 
\begin{equation*}
\beta_{k+1}\geq \beta_1=\beta_0+\|\mbox{grad} f(x_{0})\|^2=\|\mbox{grad} f(x_{0})\|^2,
\end{equation*}
which is an immediate consequence of \eqref{eq:r1} and $\beta_0=0$. Combining the inequality in \eqref{eq:53.1} with \eqref{eq:r1}, we get
\begin{equation}\label{eq:56}
\sum_{k=0}^{k_{0}-1}\alpha_{k}\|\mbox{grad} f(x_{k})\|^{2}\leq \dfrac{\eta}{\|\mbox{grad} f(x_{0})\|}\sum_{k=0}^{k_{0}-1}(\beta_{k+1}-\beta_{k})=\dfrac{\eta}{\|\mbox{grad} f(x_{0})\|}\beta_{k_{0}}.
\end{equation}
Hence, \eqref{eq:10.1} is obtained directly by combining \eqref{eq:56} with the inequality $\beta_{k_0} < \eta^2 L^2$, which follows by taking $k = k_{0} - 1$ in $\sqrt{\beta_{k+1}} < \eta L$. To conclude the proof of item $iii)$, note that
\begin{equation*}
f(x_{k_{0}}) - f(x_{0}) = \sum_{k=0}^{k_{0}-1} \left( f(x_{k+1}) - f(x_{k}) \right) \leq \sum_{k=0}^{k_{0}-1} \dfrac{L \alpha_{k}^{2}}{2} (\beta_{k+1} - \beta_{k}),
\end{equation*}
where the last inequality is obtained from item $i)$. On the other hand, from \eqref{eq:r2}, we have $\alpha_{k}^{2} = \eta^2 / \beta_{k+1}$, which, combined with the last inequality and using again that $\beta_{k+1} \geq \|\mbox{grad} f(x_{0})\|^2$ for all $k \in \mathbb{N}$, yields
\begin{eqnarray*}
f(x_{k_{0}}) - f(x_{0}) &\leq & \dfrac{L \eta^{2}}{2} \sum_{k=0}^{k_{0}-1} \dfrac{1}{\beta_{k+1}} (\beta_{k+1} - \beta_{k})\\ 
&\leq & \dfrac{L \eta^{2}}{2 \|\mbox{grad} f(x_{0})\|^{2}} \sum_{k=0}^{k_{0}-1} (\beta_{k+1} - \beta_{k})\\
&=& \dfrac{L \eta^{2}}{2 \|\mbox{grad} f(x_{0})\|^{2}} \beta_{k_{0}}.
\end{eqnarray*}
Therefore, \eqref{eq:extra1.1} follows by using the fact that $\beta_{k_0} < \eta^2 L^2$, concluding the proof. 
\end{proof}

\begin{lemma}\label{lem:4.1}
Suppose that A1 and A2 hold and let $\left\{x_{k}\right\}_{k\geq 0}$ be generated by Algorithm 1. If 
\begin{equation}\label{eq:r12}
k_{0}=\inf \left\{k\in\mathbb{N}\colon\beta_{k+1}\geq\eta^{2}L^{2}\right\} <+\infty,
\end{equation}
then
\begin{equation}
\sum_{k=k_{0}}^{T-1}\alpha_{k}\|\grad f(x_{k})\|^{2}\leq 2\left(f(x_{0})-f^{*}+\dfrac{\eta^{4}L^{3}}{2\|\grad f(x_{0})\|^{2}}\right), \qquad \forall \, T>k_{0},
\label{eq:r14}
\end{equation}
and
\begin{equation}
\alpha_{k}\geq \left(L+\frac{2(f(x_{0})-f^{*})}{\eta^{2}}+\frac{\eta^{2}L^{3}}{\|\grad f(x_{0})\|^{2}}\right)^{-1},\qquad \forall \, k\geq k_{0}.
\label{eq:r13}
\end{equation}
\end{lemma}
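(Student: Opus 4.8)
The plan is to exploit the definition \eqref{eq:r12} of $k_0$ together with the monotonicity of $\{\beta_k\}$ in order to place every index $k\geq k_0$ in the regime where item ii) of Lemma~\ref{le:rlc} applies, and then to reuse the pre-threshold estimate \eqref{eq:extra1.1} to control $f(x_{k_0})$.

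First I would record the elementary but crucial fact that $\{\beta_k\}$ is nondecreasing (by \eqref{eq:r1}), so that for every $k\geq k_0$ one has $\beta_{k+1}\geq\beta_{k_0+1}\geq\eta^{2}L^{2}$, whence $\alpha_k=\eta/\sqrt{\beta_{k+1}}\leq 1/L$. This is precisely the hypothesis of item ii) of Lemma~\ref{le:rlc}, which gives the descent inequality $f(x_k)-f(x_{k+1})\geq\tfrac{\alpha_k}{2}\|\grad f(x_k)\|^{2}$ for all $k\geq k_0$. Telescoping this from $k_0$ to $T-1$ and using $f(x_T)\geq f^{*}$ yields $\sum_{k=k_0}^{T-1}\alpha_k\|\grad f(x_k)\|^{2}\leq 2\bigl(f(x_{k_0})-f^{*}\bigr)$. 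To complete \eqref{eq:r14} I would bound $f(x_{k_0})$ from above: if $k_0\geq 1$ then \eqref{eq:extra1.1} applies verbatim and gives $f(x_{k_0})\leq f(x_0)+\eta^{4}L^{3}/(2\|\grad f(x_0)\|^{2})$, while if $k_0=0$ the same bound holds trivially since $f(x_{k_0})=f(x_0)$ and the added term is nonnegative.

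For \eqref{eq:r13} I would convert \eqref{eq:r14} into an upper bound on $\beta_{k+1}$, which is the heart of the argument. Writing $\beta_{k+1}=\beta_{k_0}+\sum_{j=k_0}^{k}\|\grad f(x_j)\|^{2}$ and noting $\beta_{k_0}<\eta^{2}L^{2}$ (valid in both cases, since $\beta_0=0$ and, when $k_0\geq 1$, taking $k=k_0-1$ in the definition of $k_0$ gives $\beta_{k_0}<\eta^{2}L^{2}$), the key step is to replace each $\|\grad f(x_j)\|^{2}$ by $\alpha_j^{-1}\cdot\alpha_j\|\grad f(x_j)\|^{2}$ and observe that, by monotonicity, $\alpha_j^{-1}=\sqrt{\beta_{j+1}}/\eta\leq\sqrt{\beta_{k+1}}/\eta$ for every $j\leq k$. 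This factors out $\sqrt{\beta_{k+1}}/\eta$ and allows me to invoke \eqref{eq:r14} with $T=k+1$, producing the self-referential inequality $\beta_{k+1}\leq\eta^{2}L^{2}+\tfrac{C}{\eta}\sqrt{\beta_{k+1}}$, where $C$ denotes the right-hand side of \eqref{eq:r14}.

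Finally I would close the loop. Setting $u=\sqrt{\beta_{k+1}}$ and dividing the quadratic bound by $u>0$ gives $u\leq \eta^{2}L^{2}/u+C/\eta$, and since $u\geq\eta L$ (again from $\beta_{k+1}\geq\eta^{2}L^{2}$) the first term is at most $\eta L$, so $u\leq\eta L+C/\eta$. Hence $\alpha_k^{-1}=u/\eta\leq L+C/\eta^{2}$, and substituting the explicit value of $C$ reproduces exactly \eqref{eq:r13}. The main obstacle I anticipate is the third paragraph: the trick of dominating $\|\grad f(x_j)\|^{2}$ by the weighted quantity $\alpha_j\|\grad f(x_j)\|^{2}$ through the uniform lower bound $\alpha_j\geq\eta/\sqrt{\beta_{k+1}}$ is what makes \eqref{eq:r14} usable to bound $\beta_{k+1}$ against itself; everything afterward reduces to a one-line manipulation of a scalar quadratic inequality, kept clean by the a priori bound $\sqrt{\beta_{k+1}}\geq\eta L$, which avoids resorting to the quadratic formula.
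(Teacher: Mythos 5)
Your proof is correct, and its first half coincides with the paper's: monotonicity of $\{\beta_k\}$ forces $\alpha_k\leq 1/L$ for all $k\geq k_0$, Lemma~\ref{le:rlc}~(ii) gives the descent inequality, telescoping plus A2 gives $\sum_{k=k_0}^{T-1}\alpha_k\|\grad f(x_k)\|^2\leq 2(f(x_{k_0})-f^{*})$, and \eqref{eq:extra1.1} (trivially valid when $k_0=0$) bounds $f(x_{k_0})$, yielding \eqref{eq:r14}. For \eqref{eq:r13}, however, you take a genuinely different route. The paper lower-bounds each summand through the identity $\alpha_k\|\grad f(x_k)\|^{2}=\eta(\beta_{k+1}-\beta_k)/\sqrt{\beta_{k+1}}=\eta(\sqrt{\beta_{k+1}}-\sqrt{\beta_k})(\sqrt{\beta_{k+1}}+\sqrt{\beta_k})/\sqrt{\beta_{k+1}}\geq\eta(\sqrt{\beta_{k+1}}-\sqrt{\beta_k})$, telescopes this difference of square roots, and combines \eqref{eq:r14} with $\sqrt{\beta_{k_0}}\leq\eta L$ to bound $\sqrt{\beta_T}$ directly. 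You instead write $\beta_{k+1}-\beta_{k_0}=\sum_{j=k_0}^{k}\|\grad f(x_j)\|^{2}$, factor out the worst reciprocal stepsize $\alpha_j^{-1}\leq\sqrt{\beta_{k+1}}/\eta$, and invoke \eqref{eq:r14} with $T=k+1$ to obtain the self-referential inequality $\beta_{k+1}\leq\eta^{2}L^{2}+(C/\eta)\sqrt{\beta_{k+1}}$, with $C$ the right-hand side of \eqref{eq:r14}; you then close the loop using the a priori bound $\sqrt{\beta_{k+1}}\geq\eta L$, which holds precisely because $k\geq k_0$. Both arguments rest on the same three ingredients --- \eqref{eq:r14}, monotonicity of $\beta_k$, and $\beta_{k_0}\leq\eta^{2}L^{2}$ --- and deliver the identical constant in \eqref{eq:r13}. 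The paper's telescoping is the standard AdaGrad-Norm manipulation and avoids any bootstrap; your quadratic-inequality argument avoids the difference-of-square-roots identity at the price of needing the post-threshold lower bound on $\beta_{k+1}$, which the paper's version never uses. Either proof is complete and rigorous.
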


\begin{proof}
In view of \eqref{eq:r2}, \eqref{eq:r1}, and \eqref{eq:r12}, we have
\begin{equation*}
\alpha_k =\dfrac{\eta}{\sqrt{\beta_{k+1}}} \leq \dfrac{\eta}{\sqrt{\beta_{k_0+1}}}\leq\dfrac{1}{L},\qquad k\geq k_{0}.
\end{equation*}
Thus, by Lemma \ref{le:rlc} (ii),  it follows that
\begin{equation*}
f(x_{k})-f(x_{k+1})\geq\dfrac{\alpha_{k}}{2}\|\grad f(x_{k})\|^{2},\qquad k\geq k_{0}.
\end{equation*}
Summing up these inequalities for $k=k_{0},\ldots,T-1$, and using A2, we get
\begin{equation}
\sum_{k=k_{0}}^{T-1}\dfrac{\alpha_{k}}{2}\|\grad f(x_{k})\|^{2}\leq f(x_{k_{0}})-f^{*}.
\label{eq:15}
\end{equation}
If $k_{0}=0$, then it follows from (\ref{eq:15}) that \eqref{eq:r14} is true. If $k_{0}\geq 1$, then 
$
\beta_{k+1}<\eta^{2}L^{2}
$
for $k=0,\ldots,k_{0}-1$.
Thus, by inequality \eqref{eq:extra1.1} in Lemma  \ref{le:rlc} we have
\begin{equation}
f(x_{k_{0}})-f^{*}\leq f(x_{0})-f^{*}+\dfrac{\eta^{4}L^{3}}{2\|\grad f(x_{0})\|^{2}}.
\label{eq:16}
\end{equation}
Therefore, combining (\ref{eq:15}) and (\ref{eq:16}) we see that (\ref{eq:r14}) is also true when $k_{0}\geq 1$. On the other hand, notice that
\begin{align}
\dfrac{\alpha_{k}}{2}\|\grad f(x_{k})\|^{2} & =\dfrac{\eta}{2}\dfrac{\beta_{k+1}-\beta_{k}}{\sqrt{\beta_{k+1}}} \nonumber\\
& =\dfrac{\eta}{2}\dfrac{(\sqrt{\beta_{k+1}}-\sqrt{\beta_{k}})(\sqrt{\beta_{k+1}}+\sqrt{\beta_{k}})}{\sqrt{\beta_{k+1}}} \geq \dfrac{\eta}{2}(\sqrt{\beta_{k+1}}-\sqrt{\beta_{k}}).
\label{eq:17}
\end{align}
Now, combining \eqref{eq:r14} and (\ref{eq:17}), it follows that
\begin{eqnarray*}
\dfrac{\eta}{2}(\sqrt{\beta_{T}}-\sqrt{\beta_{k_{0}}})&=&\sum_{k=k_{0}}^{T-1} \dfrac{\eta}{2} ( \sqrt{\beta_{k+1}}-\sqrt{\beta_{k} }) \leq \sum_{k=k_{0}}^{T-1}\dfrac{\alpha_{k}}{2}\|\grad f(x_{k})\|^{2}\\
&\leq & f(x_{0})-f^{*}+\dfrac{\eta^{4}L^{3}}{2\|\grad f(x_{0})\|^{2}},
\end{eqnarray*}
which implies
\begin{align*}
\sqrt{\beta_{T}}&\leq  \sqrt{\beta_{k_{0}}}+\dfrac{2(f(x_{0})-f^{*})}{\eta}+\dfrac{\eta^{3}L^{3}}{\|\grad f(x_{0})\|^{2}}\\
                  &\leq \eta L+\dfrac{2(f(x_{0})-f^{*})}{\eta}+\dfrac{\eta^{3}L^{3}}{\|\grad f(x_{0})\|^{2}}.
\end{align*}
Since $T$ is an arbitrary integer bigger than $k_{0}$, we have
\begin{equation*}
\sqrt{\beta_{k+1}}\leq\eta L+\dfrac{2(f(x_{0})-f^{*})}{\eta}+\dfrac{\eta^{3}L^{3}}{\|\grad f(x_{0})\|^{2}},\qquad k\geq k_{0}.
\end{equation*}
Consequently,
\begin{equation*}
\alpha_{k}=\dfrac{\eta}{\sqrt{\beta_{k+1}}}\geq \left( L+\dfrac{2(f(x_{0})-f^{*})}{\eta^2}+\dfrac{\eta^{2}L^{3}}{\|\grad f(x_{0})\|^{2}}\right)^{-1},\qquad k\geq k_{0},
\end{equation*}
which implies that \eqref{eq:r13} is true.
\end{proof}

\begin{lemma}
\label{lem:5}
Suppose that A1 and A2 hold and let $\left\{x_{k}\right\}_{k\geq 0}$ be generated by Algorithm 1. Then,
\begin{equation}
\alpha_{k}\geq \left(L+\frac{2(f(x_{0})-f^{*})}{\eta^{2}}+\frac{\eta^{2}L^{3}}{\|\grad f(x_{0})\|^{2}}\right)^{-1} \equiv \alpha_{\min}, \qquad k\geq 0,
\label{eq:18}
\end{equation}
and 
\begin{equation}
\sum_{k=0}^{T-1}\alpha_{k}\|\grad f(x_{k})\|^{2}\leq  \dfrac{ \eta^{3}L^{2}  }{\|\grad f(x_{0})\|} + 2(f(x_{0})-f^{*})+\dfrac{\eta^{4}L^{3}   }{\|\grad f(x_{0})\|^{2}}, \qquad T\geq 1.
\label{eq:19}
\end{equation}
\end{lemma}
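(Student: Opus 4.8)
The plan is to organize the entire argument around the index $k_{0}$ defined in \eqref{eq:r12}, distinguishing whether the threshold $\eta^{2}L^{2}$ is ever reached by the sequence $\{\beta_{k+1}\}$, i.e.\ whether $k_{0}<+\infty$ or $k_{0}=+\infty$. The one elementary fact I would isolate first and use throughout is that $\alpha_{\min}\leq 1/L$: indeed the reciprocal defining $\alpha_{\min}$ in \eqref{eq:18} has denominator equal to $L$ plus two nonnegative terms, so discarding those terms only decreases the denominator and increases the reciprocal.

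To establish the lower bound \eqref{eq:18}, I would argue by cases. If $k_{0}<+\infty$, then for every $k\geq k_{0}$ the estimate \eqref{eq:r13} of Lemma \ref{lem:4.1} gives precisely $\alpha_{k}\geq\alpha_{\min}$, while for every $k<k_{0}$ (a regime that is nonempty only when $k_{0}\geq 1$) one has $\beta_{k+1}<\eta^{2}L^{2}$, so \eqref{eq:9.1} of Lemma \ref{le:rlc} yields $\alpha_{k}>1/L\geq\alpha_{\min}$. If instead $k_{0}=+\infty$, then $\beta_{k+1}<\eta^{2}L^{2}$ holds for all $k$, and applying \eqref{eq:9.1} with an arbitrarily large truncation index shows $\alpha_{k}>1/L\geq\alpha_{\min}$ for every $k$. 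In both cases \eqref{eq:18} follows.

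For the summation bound \eqref{eq:19} I would again split on $k_{0}$. When $k_{0}<+\infty$ and $T>k_{0}$, I split $\sum_{k=0}^{T-1}$ into the head $\sum_{k=0}^{k_{0}-1}$ and the tail $\sum_{k=k_{0}}^{T-1}$: the head is controlled by \eqref{eq:10.1}, contributing $\eta^{3}L^{2}/\|\grad f(x_{0})\|$ (and vanishing when $k_{0}=0$), while the tail is controlled by \eqref{eq:r14}, contributing $2(f(x_{0})-f^{*})+\eta^{4}L^{3}/\|\grad f(x_{0})\|^{2}$; summing the two contributions reproduces the right-hand side of \eqref{eq:19} exactly. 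The remaining sub-case $T\leq k_{0}$, together with the entire case $k_{0}=+\infty$, is even simpler: then $\beta_{k+1}<\eta^{2}L^{2}$ for every $k\in\{0,\ldots,T-1\}$, so \eqref{eq:10.1} applied with truncation index $T$ bounds the whole sum by $\eta^{3}L^{2}/\|\grad f(x_{0})\|$, which does not exceed the stated right-hand side because the two remaining terms are nonnegative.

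The proof is in essence a bookkeeping assembly of the preceding lemmas, so I do not expect a genuine analytic obstacle; the only point demanding care is the exhaustive case analysis on the pair $(k_{0},T)$. In particular I would be careful that Lemma \ref{lem:4.1} presupposes $k_{0}<+\infty$, so the regime $k_{0}=+\infty$ must be treated purely through the Lemma \ref{le:rlc} estimates \eqref{eq:9.1} and \eqref{eq:10.1}, and that the head estimate \eqref{eq:10.1} is legitimately invoked only when its hypothesis $\beta_{k+1}<\eta^{2}L^{2}$ holds on the full range of summation indices.
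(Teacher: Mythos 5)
Your proposal is correct and follows essentially the same route as the paper's own proof: both split on whether $k_{0}$ from \eqref{eq:r12} is finite, use Lemma \ref{le:rlc}~(iii) (via \eqref{eq:9.1} and \eqref{eq:10.1}) on the pre-threshold indices and Lemma \ref{lem:4.1} (via \eqref{eq:r13} and \eqref{eq:r14}) on the post-threshold tail, and assemble \eqref{eq:19} by the same head/tail decomposition with the same sub-case $T\leq k_{0}$. Your explicit isolation of the elementary fact $\alpha_{\min}\leq 1/L$ is only a presentational refinement of a step the paper uses implicitly.
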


\begin{proof}
Let us divide the proof in two cases.
\\[0.2cm]
\noindent\textbf{Case 1:} $ k_0=\inf \left\{k\in\mathbb{N}\colon\beta_{k+1}\geq\eta^{2}L^{2}\right\}=+\infty$.
\\[0.2cm]
In this case, we have
$\beta_{k+1}<\eta^{2}L^{2}$ for all $k\geq 0$. Thus, it follows from Lemma \ref{le:rlc} (iii) that 
\begin{equation*}
\alpha_{k}>\dfrac{1}{L},\quad\forall k\geq 0\quad\text{and}\quad \sum_{k=0}^{T-1}\alpha_{k}\|\grad f(x_{k})\|^{2}\leq\dfrac{\eta^{3}L^{2}}{\|\grad f(x_{0})\|},\qquad T\geq 1.
\end{equation*}
Therefore, (\ref{eq:18}) and (\ref{eq:19}) hold.
\\[0.2cm]
\noindent\textbf{Case 2:} $ k_0=\inf \left\{k\in\mathbb{N}\colon\beta_{k+1}\geq\eta^{2}L^{2}\right\}<+\infty$.
\\[0.2cm]
In this case, if $k_{0}=0$, then (\ref{eq:18}) and (\ref{eq:19}) follow directly from Lemma \ref{lem:4.1}. If $k_{0}\geq 1$, it follows from Lemmas \ref{le:rlc} (iii) and \ref{lem:4.1} that 
$\alpha_{k}>1/L $ for $k=0,\ldots,k_{0}-1$,
and
$\alpha_{k}\geq \alpha_{\min}$ for  $k\geq k_{0}$.
Therefore, (\ref{eq:18}) is true for all $k\geq 0$. Moreover, given $T\geq 1$, we have two possibilities. 
\\[0.2cm]
\noindent\textbf{Subcase 2.1:} $T\leq k_{0}$.
\\[0.2cm]
In this subcase, it follows from Lemma \ref{le:rlc} that
\begin{equation*}
\sum_{k=0}^{T-1}\alpha_{k}\|\grad f(x_{k})\|^{2}\leq\sum_{k=0}^{k_{0}-1}\alpha_{k}\|\grad f(x_{k})\|^{2}\leq\dfrac{\eta^{3}L^{2}}{\|\grad f(x_{0})\|},
\end{equation*}
and so (\ref{eq:19}) is true.
\\[0.2cm]
\noindent\textbf{Subcase 2.2:} $T>k_{0}$.
\\[0.2cm]
In this subcase, by Lemma \ref{le:rlc} (iii) and Lemma 3 \ref{lem:4.1} we have
\begin{align*}
\sum_{k=0}^{T-1}\alpha_{k}\|\grad f(x_{k})\|^{2}&=\sum_{k=0}^{k_{0}-1}\alpha_{k}\|\grad f(x_{k})\|^{2}+\sum_{k=k_{0}}^{T-1}\alpha_{k}\|\grad f(x_{k})\|^{2}\\
 &\leq  \dfrac{\eta^{3}L^{2}}{\|\grad f(x_{0})\|}+2\left(f(x_{0})-f^{*}\right)+\dfrac{\eta^{4}L^{3}}{\|\grad f(x_{0})\|^2},
\end{align*}
that is, (\ref{eq:19}) is true.
\end{proof}

\section{Worst-Case Complexity Bounds}
\label{sec:complexity}

In this section, we establish iteration-complexity bounds for MAdaGrad~\eqref{eq:MAdaGrad}. We show that the method achieves a complexity of $\mathcal{O}(\epsilon^{-2})$, which improves to $\mathcal{O}(\epsilon^{-1})$ both when the objective function is convex and when it globally satisfies the Polyak--Łojasiewicz (PL) condition. It is worth mentioning that, in the convex case, we assume that the manifold $M$ has sectional curvature bounded below by a negative constant.

\subsection{General Case}

\begin{theorem}
\label{thm:1}
Suppose that A1-A3 hold and let $\left\{x_{k}\right\}_{k\geq 0}$ be generated by Algorithm 1. Given $\epsilon>0$, let 
\begin{equation*}
    T_{g}(\epsilon)=\inf\left\{k\in\mathbb{N}\,:\,\|\grad f(x_{k})\|\leq\epsilon\right\}.
\end{equation*}
Then
\begin{equation}
T_{g}(\epsilon)\leq\left[ \dfrac{\eta^{3}L^{2}}{ \alpha_{\min} \|\grad f(x_{0})\|} + \frac{2(f(x_{0}) - f^{*})}{ \alpha_{\min} } + \frac{\eta^{4}L^{3}}{ \alpha_{\min} \|\grad f(x_{0})\|^{2}} \right] \epsilon^{-2},
\label{eq:complexity1}
\end{equation}
where $\alpha_{\min}$ is defined in (\ref{eq:18}).
\end{theorem}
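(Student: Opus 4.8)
The plan is to combine the two estimates furnished by Lemma~\ref{lem:5}: the uniform lower bound $\alpha_{k}\geq\alpha_{\min}$ from \eqref{eq:18} and the summability bound \eqref{eq:19} on the weighted squared gradients. The guiding observation is that, by the definition of $T_{g}(\epsilon)$ as an infimum, every iterate strictly preceding it has gradient norm larger than $\epsilon$, so each term of the weighted sum is bounded below by a fixed positive quantity. The complexity bound then emerges simply by dividing the total accumulated gradient energy by this per-step lower bound.

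Concretely, I would set $T=T_{g}(\epsilon)$ and note that for every $k\in\{0,1,\ldots,T-1\}$ one has $\|\grad f(x_{k})\|>\epsilon$, hence $\|\grad f(x_{k})\|^{2}>\epsilon^{2}$. Combining this with $\alpha_{k}\geq\alpha_{\min}$ gives the lower estimate
\[
\sum_{k=0}^{T-1}\alpha_{k}\|\grad f(x_{k})\|^{2}>\alpha_{\min}\epsilon^{2}T.
\]
On the other hand, \eqref{eq:19} bounds the same sum from above by the constant $C:=\eta^{3}L^{2}/\|\grad f(x_{0})\|+2(f(x_{0})-f^{*})+\eta^{4}L^{3}/\|\grad f(x_{0})\|^{2}$. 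Chaining the two inequalities yields $\alpha_{\min}\epsilon^{2}T\leq C$, and dividing by $\alpha_{\min}\epsilon^{2}>0$ produces exactly the claimed bound $T\leq(C/\alpha_{\min})\epsilon^{-2}$, which is the right-hand side of \eqref{eq:complexity1} once $C$ is expanded.

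The only point requiring care is the possibility that $T_{g}(\epsilon)=+\infty$, which I would dispose of by contradiction. If no iterate satisfied $\|\grad f(x_{k})\|\leq\epsilon$, then the lower estimate above would hold for every finite $T$, giving $\alpha_{\min}\epsilon^{2}T\leq C$ for all $T$, which is impossible since the left-hand side grows without bound. Hence $T_{g}(\epsilon)$ is finite and the displayed argument applies directly. A second bookkeeping subtlety is to use the strict inequality $\|\grad f(x_{k})\|>\epsilon$ valid for indices below the infimum, rather than the non-strict one, so that it is $T_{g}(\epsilon)$ itself—and not $T_{g}(\epsilon)-1$—that gets bounded.

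I do not anticipate a genuine obstacle here: the substantive work has already been carried out in Lemmas~\ref{le:rlc}--\ref{lem:5}, where the adaptive step sizes were shown to remain bounded away from zero while the accumulated weighted gradient energy stays bounded by a constant depending only on the problem data. Theorem~\ref{thm:1} is the routine consequence of dividing the energy bound by the per-step lower bound, and its proof should occupy only a few lines.
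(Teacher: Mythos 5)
Your proposal is correct and follows essentially the same route as the paper: invoke Lemma~\ref{lem:5} to bound $\sum_{k=0}^{T_{g}(\epsilon)-1}\alpha_{k}\|\grad f(x_{k})\|^{2}$ above by the constant $C$ and below by $\alpha_{\min}\,T_{g}(\epsilon)\,\epsilon^{2}$, then divide. Your explicit treatment of the case $T_{g}(\epsilon)=+\infty$ is a small bookkeeping addition the paper leaves implicit, but the substance of the argument is identical.
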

\begin{proof}
If $T_{g}(\epsilon)=0$, then (\ref{eq:complexity1}) is true. Thus, let us assume that $T_{g}(\epsilon)\geq 1$. By Lemma \ref{lem:5}, we have
\begin{align*}
&\dfrac{\eta^{3}L^{2}}{\|\grad f(x_{0})\|} + 2(f(x_{0}) - f^{*}) + \dfrac{\eta^{4}L^{3}}{\|\grad f(x_{0})\|^{2}}\\
&\geq \sum_{k=0}^{T_{g}(\epsilon)-1} \alpha_{k} \|\grad f(x_{k})\|^{2} \\
&\geq \alpha_{\min} \sum_{k=0}^{T_{g}(\epsilon)-1} \|\grad f(x_{k})\|^{2} \\
&\geq \alpha_{\min} T_{g}(\epsilon)\epsilon^2.
\end{align*}
Then, isolating $T_{g}(\epsilon)$, we conclude that (\ref{eq:complexity1}) also holds in this case.
\end{proof}


\subsection{Convex Case}

\begin{lemma}\label{lem:boud}
Suppose that A1 and A2 hold, and  let $\{x_{k}\}_{k\geq 0}$ be  generated by  Algorithm 1. Then, 
\small
	\begin{equation}\label{desi.des}
	\sum_{k=0}^{\infty}\alpha^2_k\left\|\grad f(x_{k})\right\|^2\leq \rho \equiv  \frac{\eta}{\|\grad f(x_{0}) \|} \left[ \dfrac{ \eta^{3}L^{2}  }{\|\grad f(x_{0})\|} + 2(f(x_{0})-f^{*})+\dfrac{\eta^{4}L^{3}   }{\|\grad f(x_{0})\|^{2}} \right].
	\end{equation}
\normalsize
\end{lemma}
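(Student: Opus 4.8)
The plan is to reduce everything to the already-established summability bound \eqref{eq:19} from Lemma \ref{lem:5} by exploiting the simple fact that the stepsizes $\alpha_k$ are uniformly bounded above. The key observation is that the bracketed quantity defining $\rho$ in \eqref{desi.des} is \emph{exactly} the right-hand side of \eqref{eq:19}, so I only need to extract one extra factor of $\eta/\|\grad f(x_0)\|$ per summand.

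First I would establish the uniform upper bound on the stepsizes. From \eqref{eq:r1} and $\beta_0=0$, the sequence $\{\beta_k\}$ is nondecreasing and satisfies $\beta_{k+1}\geq\beta_1=\|\grad f(x_0)\|^2$ for every $k\geq 0$, exactly as already noted in the proof of Lemma \ref{le:rlc}. Combining this with \eqref{eq:r2} gives
\[
\alpha_k=\dfrac{\eta}{\sqrt{\beta_{k+1}}}\leq\dfrac{\eta}{\|\grad f(x_0)\|},\qquad k\geq 0.
\]

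Next I would factor each term of the target series as $\alpha_k^2\|\grad f(x_k)\|^2=\alpha_k\cdot\bigl(\alpha_k\|\grad f(x_k)\|^2\bigr)$ and pull out the uniform bound on the leading $\alpha_k$. For any finite $T\geq 1$ this yields
\[
\sum_{k=0}^{T-1}\alpha_k^2\|\grad f(x_k)\|^2
\leq\dfrac{\eta}{\|\grad f(x_0)\|}\sum_{k=0}^{T-1}\alpha_k\|\grad f(x_k)\|^2
\leq\dfrac{\eta}{\|\grad f(x_0)\|}\left[\dfrac{\eta^3L^2}{\|\grad f(x_0)\|}+2(f(x_0)-f^*)+\dfrac{\eta^4L^3}{\|\grad f(x_0)\|^2}\right],
\]
where the last step is precisely \eqref{eq:19} from Lemma \ref{lem:5}. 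The right-hand side equals $\rho$ and is independent of $T$.

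Finally, since the partial sums of the nonnegative series $\sum_k\alpha_k^2\|\grad f(x_k)\|^2$ are all bounded above by $\rho$, letting $T\to\infty$ gives the claimed bound \eqref{desi.des}. I do not anticipate any genuine obstacle here: the entire argument is a one-line comparison, and the only thing that must be noticed is the factorization together with the uniform estimate $\alpha_k\leq\eta/\|\grad f(x_0)\|$, which is what reduces the new series to the already-controlled weighted sum of \eqref{eq:19}. The mild point to state carefully is that $\rho$ bounds every finite partial sum uniformly, so passing to the supremum (equivalently, the limit in $T$) of a monotone nonnegative sequence is legitimate.
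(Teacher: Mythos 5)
Your proof is correct and follows essentially the same route as the paper's: bound $\alpha_k\leq\eta/\|\grad f(x_0)\|$ via $\beta_{k+1}\geq\beta_1=\|\grad f(x_0)\|^2$, pull this factor out of the series, and invoke the bound \eqref{eq:19} of Lemma \ref{lem:5}. The only difference is that you spell out the passage from uniformly bounded finite partial sums to the infinite series, which the paper leaves implicit.
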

\begin{proof}
From \eqref{eq:r1} and \eqref{eq:r2}, we have  
$\alpha_{k} \leq \eta/\|\grad f(x_{0}) \|$ for all $k\geq 0$, which implies  
$$
\sum_{k=0}^{\infty}\alpha^2_k\left\|\grad f(x_{k})\right\|^2 \leq  \frac{\eta}{\|\grad f(x_{0}) \|} \sum_{k=0}^{\infty}  \alpha_k\left\|\grad f(x_{k})\right\|^2.
$$  
Thus, the proof of \eqref{desi.des}  follows from Lemma \ref{lem:5}.
\end{proof}
Now, let us consider the following assumptions:  
\begin{mdframed}
\begin{itemize}  
    \item[\textbf{A3.}]  \( M \) has sectional curvature bounded below by a negative constant, i.e., \( K \geq \kappa \) with \( \kappa < 0 \);  
    \item[\textbf{A4.}]  \( f: M \to \mathbb{R}\) is convex on $M$ and admits a minimizer $q$ with $f^{*}=f(q)$.  
\end{itemize}  
\end{mdframed}

Taking into account Lemma \ref{lem:boud}, the next lemma follows from \cite[Lemma 3.6]{ferreira2020iteration}.
\begin{lemma}\label{pr:ltd}
Suppose that A3 and A4 hold, and let $\{x_{k}\}_{k\geq 0}$ be the sequence generated by Algorithm 1. Then, for each $k\geq0$,  the following inequality holds:
\begin{equation*}\label{eq;desgen}
d^2(x_{k+1},q)\leq d^2(x_{k},q) +  {\cal K}_{\rho,\kappa}^q \alpha_k^2\left\| \grad f(x_{k}) \right\|^2 + 2\alpha_k[f^{*}-f(x_{k})], 
\end{equation*}
where 
\begin{equation}  \label{eq:kkappa}
{\cal K}_{\rho,\kappa}^q  := \frac{\sinh\left(\hat{\kappa}\sqrt{\rho}\right)}{\hat{\kappa}\sqrt{\rho}}\frac{{\cal C}_{\rho,\kappa}^q }{\tanh {\cal C}_{\rho,\kappa}^q } 
\qquad {\cal C}_{\rho,\kappa}^q  :=\cosh^{-1}\left(\cosh(\hat{\kappa}d(x_{0},q))e^{\frac{1}{2}\left(\hat{\kappa}\sqrt{\rho}\right)\sinh\left(\hat{\kappa}\sqrt{\rho}\right)}\right),
\end{equation}
with $\rho$ is defined in \eqref{desi.des} and  $\hat{\kappa}\equiv \sqrt{|\kappa|}$.
\end{lemma}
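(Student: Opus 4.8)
The plan is to apply \cite[Lemma 3.6]{ferreira2020iteration} to the sequence $\{x_{k}\}$, so that the work reduces to verifying its hypotheses and matching the quantities that enter the constant ${\cal K}_{\rho,\kappa}^{q}$. I would first record the two structural ingredients the cited lemma requires. By A3 the sectional curvature of $M$ is bounded below by $\kappa<0$, which supplies the comparison geometry; by A4 the function $f$ is geodesically convex with minimizer $q$, so that at every iterate the gradient inequality
\[
\langle \grad f(x_{k}),\exp_{x_{k}}^{-1}q\rangle \leq f(q)-f(x_{k})=f^{*}-f(x_{k})
\]
holds. This is exactly what converts the mixed (inner-product) term of the law of cosines into $2\alpha_{k}[f^{*}-f(x_{k})]$, since $\exp_{x_{k}}^{-1}x_{k+1}=-\alpha_{k}\grad f(x_{k})$ by \eqref{eq:r3}.

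Second, I would extract from Lemma~\ref{lem:boud} the uniform control on step lengths that drives the whole estimate. Because $d(x_{k},x_{k+1})=\alpha_{k}\|\grad f(x_{k})\|$ and the series $\sum_{k\geq 0}\alpha_{k}^{2}\|\grad f(x_{k})\|^{2}$ is bounded above by $\rho$, every single step obeys $d(x_{k},x_{k+1})\leq\sqrt{\rho}$ and the trajectory has summable squared step lengths controlled by $\rho$. This is the precise quantitative input that \cite[Lemma 3.6]{ferreira2020iteration} needs: it is what allows one to propagate an a priori bound $\hat{\kappa}\,d(x_{k},q)\leq {\cal C}_{\rho,\kappa}^{q}$ by induction, and thereby replace the $k$-dependent comparison factor $\tfrac{\hat{\kappa}d(x_{k},q)}{\tanh(\hat{\kappa}d(x_{k},q))}$ by the fixed quantity $\tfrac{{\cal C}_{\rho,\kappa}^{q}}{\tanh {\cal C}_{\rho,\kappa}^{q}}$ appearing in ${\cal K}_{\rho,\kappa}^{q}$; the remaining factor $\tfrac{\sinh(\hat{\kappa}\sqrt{\rho})}{\hat{\kappa}\sqrt{\rho}}$ is the constant-curvature comparison coefficient associated with a single step of length at most $\sqrt{\rho}$, monotonicity of $t\mapsto \sinh(t)/t$ making it a uniform bound.

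With these facts in hand the conclusion follows by invoking \cite[Lemma 3.6]{ferreira2020iteration} on the geodesic triangle with vertices $x_{k}$, $x_{k+1}=\exp_{x_{k}}(-\alpha_{k}\grad f(x_{k}))$ and $q$: the law of cosines for manifolds of curvature $\geq\kappa$ gives
\[
d^{2}(x_{k+1},q)\leq d^{2}(x_{k},q)+{\cal K}_{\rho,\kappa}^{q}\,\alpha_{k}^{2}\|\grad f(x_{k})\|^{2}+2\alpha_{k}\langle\grad f(x_{k}),\exp_{x_{k}}^{-1}q\rangle,
\]
and the convexity inequality above bounds the last term by $2\alpha_{k}[f^{*}-f(x_{k})]$, which is the claimed estimate.

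The step I expect to be the main obstacle is the a priori distance bound encoded in ${\cal C}_{\rho,\kappa}^{q}$. Because the comparison coefficient $\tfrac{\hat{\kappa}d(x_{k},q)}{\tanh(\hat{\kappa}d(x_{k},q))}$ is itself increasing in $d(x_{k},q)$, the recursion cannot be closed until the iterates are known to stay in a bounded region; establishing this uniform bound requires an inductive argument in the model space of constant curvature $\kappa$, where the summability $\sum_{j}d^{2}(x_{j},x_{j+1})\leq\rho$ is combined through the hyperbolic law of cosines to produce the closed form ${\cal C}_{\rho,\kappa}^{q}=\cosh^{-1}\!\big(\cosh(\hat{\kappa}d(x_{0},q))\,e^{\frac{1}{2}\hat{\kappa}\sqrt{\rho}\sinh(\hat{\kappa}\sqrt{\rho})}\big)$. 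Since this is precisely the content of \cite[Lemma 3.6]{ferreira2020iteration}, in our setting it suffices to check that the bound $\rho$ supplied by Lemma~\ref{lem:boud} plays the role of the summability constant there, and no further geometric work is needed.
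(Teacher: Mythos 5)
Your proposal takes exactly the route the paper itself takes: the paper's entire proof is the one-line observation that, given the summability bound $\rho$ from Lemma~\ref{lem:boud}, the inequality follows from \cite[Lemma 3.6]{ferreira2020iteration}, which is precisely your reduction. The extra detail you supply (the curvature-comparison law of cosines, the geodesic-convexity inequality handling the inner-product term, and the a priori bound ${\cal C}_{\rho,\kappa}^{q}$ on $d(x_k,q)$) is a reconstruction of the cited lemma's internals rather than new work needed here, and is consistent with how the constant ${\cal K}_{\rho,\kappa}^{q}$ is used.
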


\begin{theorem}
\label{thm:2}
Suppose that A1--A4 hold, and let $\left\{x_{k}\right\}_{k\geq 0}$ be the sequence generated by Algorithm 1. Given \( \epsilon > 0 \), let
\begin{equation}
    T_{f}(\epsilon)=\inf\left\{k\in\mathbb{N}\,:\,f(x_{k})-f^{*}\leq\epsilon\right\}.
    \label{eq:hitting}
\end{equation}
Then
\begin{equation}
T_{f}(\epsilon) \leq \left( \frac{d^2(x_{0},q) + \rho {\cal K}_{\rho,\kappa}^q }{2 \alpha_{\min} } \right) \epsilon^{-1},
\label{eq:21}
\end{equation}
where $\alpha_{\min}$, \( \rho \) and \( {\cal K}_{\rho, \kappa}^q \) are defined in (\ref{eq:18}), \eqref{desi.des} and \eqref{eq:kkappa}, respectively.
\end{theorem}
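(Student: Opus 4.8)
The plan is to convert the per-iteration squared-distance recursion of Lemma~\ref{pr:ltd} into a telescoping sum that controls the accumulated optimality gap, and then to exploit the uniform lower bound $\alpha_k\geq\alpha_{\min}$ together with the definition of the hitting time $T_{f}(\epsilon)$.

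First I would dispose of the trivial case $T_{f}(\epsilon)=0$, in which \eqref{eq:21} holds automatically, and assume $T_{f}(\epsilon)\geq 1$. Writing $T=T_{f}(\epsilon)$, I would rearrange the inequality of Lemma~\ref{pr:ltd} to isolate the gap term,
\begin{equation*}
2\alpha_{k}\left[f(x_{k})-f^{*}\right]\leq d^2(x_{k},q)-d^2(x_{k+1},q)+{\cal K}_{\rho,\kappa}^q\,\alpha_{k}^2\left\|\grad f(x_{k})\right\|^2,
\end{equation*}
and sum over $k=0,\ldots,T-1$. The distance terms telescope, and discarding the nonnegative contribution $-d^2(x_{T},q)$ yields
\begin{equation*}
\sum_{k=0}^{T-1}2\alpha_{k}\left[f(x_{k})-f^{*}\right]\leq d^2(x_{0},q)+{\cal K}_{\rho,\kappa}^q\sum_{k=0}^{T-1}\alpha_{k}^2\left\|\grad f(x_{k})\right\|^2\leq d^2(x_{0},q)+\rho\,{\cal K}_{\rho,\kappa}^q,
\end{equation*}
where the final step invokes the summability estimate \eqref{desi.des} of Lemma~\ref{lem:boud}.

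Next I would bound the left-hand side from below. By the definition of $T_{f}(\epsilon)$ in \eqref{eq:hitting}, every index $k\in\{0,\ldots,T-1\}$ satisfies $f(x_{k})-f^{*}>\epsilon$, and by Lemma~\ref{lem:5} we have $\alpha_{k}\geq\alpha_{\min}$ for all $k\geq 0$. Hence the left-hand side is at least $2\alpha_{\min}\epsilon\,T$. Combining the two estimates gives $2\alpha_{\min}\epsilon\,T\leq d^2(x_{0},q)+\rho\,{\cal K}_{\rho,\kappa}^q$, and isolating $T$ produces precisely \eqref{eq:21}.

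I expect no serious obstacle, since the genuinely difficult geometric content—the curvature-dependent comparison inequality relating consecutive squared distances to the optimality gap, which is where assumptions A3 and A4 enter—has already been packaged into Lemma~\ref{pr:ltd}. The only points meriting care are the correct direction of the strict inequality $f(x_{k})-f^{*}>\epsilon$ prior to the hitting time and the uniform validity of $\alpha_{k}\geq\alpha_{\min}$, both immediate from the cited results; what remains is a routine telescoping-and-summation estimate.
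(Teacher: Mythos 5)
Your proof is correct and follows essentially the same route as the paper: both rearrange and telescope the distance recursion of Lemma~\ref{pr:ltd}, bound the accumulated gradient term via Lemma~\ref{lem:boud}, and invoke $\alpha_k\geq\alpha_{\min}$ from \eqref{eq:18} together with $f(x_k)-f^{*}>\epsilon$ for $k<T_{f}(\epsilon)$. The only cosmetic difference is that the paper divides by $2\alpha_{\min}$ before summing and passes through the average $\frac{1}{T_{f}(\epsilon)}\sum_{k}(f(x_k)-f^{*})$, whereas you keep the weights $\alpha_k$ inside the sum and lower-bound it directly by $2\alpha_{\min}\epsilon\,T_{f}(\epsilon)$; the two arguments are equivalent.
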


\begin{proof}
If $T_{f}(\epsilon)=0$, then (\ref{eq:21}) is true. Thus, let us assume that $T_{f}(\epsilon)\geq 1$. By combining Lemma \ref{pr:ltd} with \eqref{eq:18}, we obtain  
\begin{equation*}  
f(x_{k}) - f^{*}  
\leq \frac{ d^2(x_{k},q) - d^2(x_{k+1},q) +  
{\cal K}_{\rho,\kappa}^q  
\alpha^{2}_{k} \|\grad f(x_{k})\|^{2} }{2 \alpha_{\min}},                
\end{equation*}  
for all \( k \geq 0 \). Summing this inequality over \( k = 0, \dots, T_{f}(\epsilon)-1 \) and applying Lemma \ref{lem:boud}, we obtain  
\begin{align*}  
\epsilon&<\min\left\{  f(x_{k}) - f^{*} \colon k=0,\ldots,T_{f}(\epsilon)-1  \right\}\leq \frac{1}{T_{f}(\epsilon)} \sum_{k=0}^{T_{f}(\epsilon)-1} (f(x_{k}) - f^{*}) \\  
& \leq \frac{1}{T_{f}(\epsilon)} \sum_{k=0}^{T_{f}(\epsilon)-1} \frac{ d^2(x_{k},q) - d^2(x_{k+1},q) +  
{\cal K}_{\rho,\kappa}^q  
\alpha^{2}_{k} \|\grad f(x_{k})\|^{2} }{2 \alpha_{\min}} \\  
& \leq \frac{1}{T_{f}(\epsilon)} \left( \frac{d^2(x_{0},q) + \rho {\cal K}_{\rho,\kappa}^q }{2\alpha_{\min}} \right).
\end{align*}  
Therefore, isolating $T_{f}(\epsilon)$ we conclude that (\ref{eq:21}) is true.  
\end{proof}

\subsection{$\mu$-Polyak-Lojasiewicz Case}

Throughout this section, the results are established taking into account the following assumption:
\begin{mdframed}
\begin{itemize}
\item[\textbf{A5.}] \( f: M \to \mathbb{R}\) has a minimizer $q\in M$, with $f^{*}=f(q)$, and there exists $\mu>0$ such that
\begin{equation*}
f(x)-f^{*}\leq\dfrac{1}{\mu}\|\grad f(x)\|^{2},\qquad x\in M.
\end{equation*}
\end{itemize}
\end{mdframed}

To the best of our knowledge, the inequality in A5 was first introduced by Polyak in \cite{polyak1963gradient} within the context of linear optimization. In this seminal work, the inequality played an important role in the asymptotic convergence analysis of the classical gradient method. 

\begin{lemma}
\label{lem:6}
Suppose that A1 and A5  hold and let $\left\{x_{k}\right\}_{k\geq 0}$ be generated by Algorithm 1. If 
\begin{equation}
\beta_{k+1}<\eta^{2}L^{2},\quad\text{for}\,\,k=0,\ldots,T-1
\label{eq:25.1}
\end{equation}
and
\begin{equation}
T\geq 1+\left[ \frac{\eta^{2}L^{2}}{\mu} \epsilon^{-1} +1\right]\log\left(\frac{\eta^{2}L^{2}}{\|\grad f(x_{0})\|^{2}}\right)
\label{eq:26}
\end{equation}
for some $\epsilon>0$, then $\min\left\{  f(x_{k}) - f^{*} \colon k=0,\ldots,T-1  \right\}\leq \epsilon$.
\end{lemma}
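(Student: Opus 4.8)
The plan is to argue by contradiction. Suppose the conclusion fails, i.e. $f(x_{k})-f^{*}>\epsilon$ for every $k=0,\ldots,T-1$; I will show this is incompatible with the lower bound \eqref{eq:26} on $T$. First I would invoke the PL inequality A5 at each such iterate to turn the assumed function-value gap into a gradient lower bound: from $f(x_{k})-f^{*}\leq\mu^{-1}\|\grad f(x_{k})\|^{2}$ we get $\|\grad f(x_{k})\|^{2}\geq\mu\,(f(x_{k})-f^{*})>\mu\epsilon$ for $k=0,\ldots,T-1$.

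The key is then to track the \emph{multiplicative} growth of the scalars $\beta_{k}$. By \eqref{eq:r1} we have $\beta_{k+1}=\beta_{k}+\|\grad f(x_{k})\|^{2}$, while hypothesis \eqref{eq:25.1} guarantees $\beta_{k}<\eta^{2}L^{2}$ for $k=1,\ldots,T$. Combining the gradient lower bound with $\beta_{k}<\eta^{2}L^{2}$ gives, for each $k=1,\ldots,T-1$,
\[
\frac{\beta_{k+1}}{\beta_{k}}=1+\frac{\|\grad f(x_{k})\|^{2}}{\beta_{k}}>1+\frac{\mu\epsilon}{\eta^{2}L^{2}}.
\]
Telescoping this product over $k=1,\ldots,T-1$, and using $\beta_{1}=\|\grad f(x_{0})\|^{2}$ (since $\beta_{0}=0$) together with $\beta_{T}<\eta^{2}L^{2}$, I would obtain
\[
\frac{\eta^{2}L^{2}}{\|\grad f(x_{0})\|^{2}}>\frac{\beta_{T}}{\beta_{1}}>\left(1+\frac{\mu\epsilon}{\eta^{2}L^{2}}\right)^{T-1}.
\]
Taking logarithms (the left side exceeds $1$ because $\beta_{1}\leq\beta_{T}<\eta^{2}L^{2}$) and applying the elementary inequality $\log(1+x)\geq x/(1+x)$ with $x=\mu\epsilon/(\eta^{2}L^{2})$, I would deduce $T<1+\left[\tfrac{\eta^{2}L^{2}}{\mu}\epsilon^{-1}+1\right]\log\!\left(\eta^{2}L^{2}/\|\grad f(x_{0})\|^{2}\right)$, directly contradicting \eqref{eq:26}.

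I expect the conceptual crux to be recognizing that the logarithmic term in \eqref{eq:26} originates from the \emph{geometric} rather than arithmetic growth of $\{\beta_{k}\}$ forced by the PL condition: each step multiplies $\beta_{k}$ by at least $1+\mu\epsilon/(\eta^{2}L^{2})$, and it is this factor that must be telescoped and then bounded. The only delicate technical point is the passage from the exponential estimate to the exact constants of \eqref{eq:26}, which hinges on using $\log(1+x)\geq x/(1+x)$; selecting this logarithmic inequality (rather than the weaker $\log(1+x)\leq x$, which points the wrong way) is precisely what makes the bound line up. It is worth noting that the descent estimates of Lemmas \ref{le:rlc}--\ref{lem:5} and the convexity assumptions are not needed here: only the recursion \eqref{eq:r1}, the standing hypotheses \eqref{eq:25.1}--\eqref{eq:26}, and the PL property A5 enter the argument.
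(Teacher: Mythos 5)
Your proof is correct and follows essentially the same route as the paper's: both rewrite the growth of $\beta_k$ multiplicatively (your telescoped product $\beta_T/\beta_1=\prod_{k=1}^{T-1}\beta_{k+1}/\beta_k$ is exactly the paper's expansion \eqref{eq:28}), lower-bound each factor via A5 together with the cap $\beta_k<\eta^2L^2$ from \eqref{eq:25.1}, and then take logarithms using $\log(1+x)\geq x/(1+x)$ to contradict \eqref{eq:26}. The only cosmetic difference is that you assume the contradiction hypothesis at the outset, whereas the paper carries the minimum of $f(x_k)-f^{*}$ through the product and invokes the contradiction at the end.
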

\begin{proof}
By \eqref{eq:r1}, we have
\begin{align}
\beta_{1}&=\|\grad f(x_{0})\|^{2}\nonumber\\
\beta_{2}&=\beta_{1}\left(1+\frac{\|\grad f(x_{1})\|^{2}}{\beta_{1}}\right)=\|\grad f(x_{0})\|^{2}\left(1+\frac{\|\grad f(x_{1})\|^{2}}{\beta_{1}}\right)\nonumber\\
\beta_{3}&=\beta_{2}\left(1+\frac{\|\grad f(x_{1})\|^{2}}{\beta_{2}}\right)=\|\grad f(x_{0})\|^{2}\left(1+\frac{\|\grad f(x_{1})\|^{2}}{\beta_{1}}\right)\left(1+\frac{\|\grad f(x_{2})\|^{2}}{\beta_{2}}\right)\nonumber\\
   &\vdots  \nonumber\\
\beta_{T}&=\|\grad f(x_{0})\|^{2}\prod_{k=1}^{T-1}\left(1+\frac{\|\grad f(x_{k})\|^{2}}{ \beta_{k}}\right).
\label{eq:28}
\end{align}
Using \eqref{eq:25.1}, \eqref{eq:28} and A5, it follows that
\begin{align*}
\eta^{2}L^{2}&> \beta_{T}=\|\grad f(x_{0})\|^{2}\prod_{k=1}^{T-1}\left[1+\frac{\|\grad f(x_{k})\|^{2}}{\beta_{k}}\right]\nonumber\\
                     &\geq  \|\grad f(x_{0})\|^{2}\prod_{k=1}^{T-1}\left[1+\frac{\mu(f(x_{k})-f^{*})}{\eta^{2}L^{2}}\right]\nonumber\\
                     &\geq  \|\grad f(x_{0})\|^{2}\prod_{k=1}^{T-1}\left[1+\frac{\mu}{\eta^{2}L^{2}}\min\left\{  f(x_{k}) - f^{*} \colon k=1,\ldots,T-1  \right\} \right]\nonumber\\
                     & =    \|\grad f(x_{0})\|^{2}\left[1+\frac{\mu}{\eta^{2}L^{2}} \min\left\{  f(x_{k}) - f^{*} \colon k=1,\ldots,T-1  \right\}\right]^{T-1}.
\end{align*}
Now, suppose by contradiction that $\min\left\{  f(x_{k}) - f^{*} \colon k=0,\ldots,T-1  \right\}>\epsilon$.
Then, applying the previous inequality and using the fact that the logarithm function is increasing, we obtain  
\begin{eqnarray*}  
\log\left(\frac{\eta^{2}L^{2}}{\|\grad f(x_{0})\|^{2}}\right)  
&>& (T - 1) \log\left(1 + \frac{\mu\epsilon}{\eta^{2}L^{2}}\right)  \\
&\geq & (T - 1) \dfrac{\frac{\mu\epsilon}{\eta^{2}L^{2}}}{1+ \frac{\mu\epsilon}{\eta^{2}L^{2}}}  
= (T - 1) \left[ \dfrac{\eta^{2}L^{2}}{\mu} \epsilon^{-1} + 1 \right]^{-1},  
\end{eqnarray*}  
which contradicts \eqref{eq:26}. This completes the proof. 
\end{proof}

\begin{lemma}
\label{lem:7}
Suppose that A1 and A5  hold  and let $\left\{x_{k}\right\}_{k\geq 0}$ be generated by Algorithm 1. If 
\begin{equation}
k_{0}=\min\left\{k\in\mathbb{N}\colon \beta_{k+1}\geq\eta^{2}L^{2}\right\}<+\infty
\label{eq:31}
\end{equation}
and
\begin{equation}
T_0\geq  \frac{\left| \log\left(\left[f(x_{0})-f^{*}+\frac{\eta^{4}L^{3}}{2\|\grad f(x_{0})\|^{2}}\right]\epsilon^{-1}\right) \right| }{ \left| \log\left(1-\frac{\mu\alpha_{\min}}{2}\right) \right| }
\label{eq:32}
\end{equation}
for some $\epsilon>0$ and for $\alpha_{\min}$ defined in (\ref{eq:18}), then
\begin{equation}
f(x_{k_{0}+ T_0})-f^{*}\leq\epsilon.
\label{eq:33}
\end{equation} 
\end{lemma}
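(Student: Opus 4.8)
The plan is to use the finiteness of $k_0$ to show that, from iteration $k_0$ onward, the stepsizes satisfy $\alpha_{\min}\leq \alpha_k\leq 1/L$, and then to combine the resulting sufficient-decrease estimate with the PL inequality A5 to obtain a \emph{linear} contraction of the optimality gap $r_k:=f(x_k)-f^*$. Iterating this contraction $T_0$ times starting from $k_0$, and controlling the initial gap $r_{k_0}$, will produce the claimed logarithmic threshold \eqref{eq:32}.

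First I would note that $\{\beta_k\}$ is nondecreasing by \eqref{eq:r1}, so the definition \eqref{eq:31} of $k_0$ gives $\beta_{k+1}\geq\beta_{k_0+1}\geq\eta^2L^2$ for every $k\geq k_0$; hence $\alpha_k=\eta/\sqrt{\beta_{k+1}}\leq 1/L$ for all such $k$. This is exactly the condition required by Lemma~\ref{le:rlc}~(ii), which then yields $f(x_k)-f(x_{k+1})\geq\frac{\alpha_k}{2}\|\grad f(x_k)\|^2$ for $k\geq k_0$. Substituting assumption A5 in the form $\|\grad f(x_k)\|^2\geq\mu(f(x_k)-f^*)$ together with the uniform lower bound $\alpha_k\geq\alpha_{\min}$ from Lemma~\ref{lem:5}, equation \eqref{eq:18}, I arrive at
\begin{equation*}
r_{k+1}\leq\left(1-\frac{\mu\alpha_{\min}}{2}\right)r_k,\qquad k\geq k_0.
\end{equation*}
Writing $c:=1-\frac{\mu\alpha_{\min}}{2}$ and applying this one-step estimate over the indices $k=k_0,\dots,k_0+T_0-1$ gives $r_{k_0+T_0}\leq c^{\,T_0}\,r_{k_0}$.

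Next I would bound the starting gap. By minimality of $k_0$ we have $\beta_{k+1}<\eta^2L^2$ for $k=0,\dots,k_0-1$, so the hypothesis of Lemma~\ref{le:rlc}~(iii) holds and \eqref{eq:extra1.1} gives $r_{k_0}\leq B$, where $B:=f(x_0)-f^*+\frac{\eta^4L^3}{2\|\grad f(x_0)\|^2}$ (the case $k_0=0$ being immediate since then $r_{k_0}=r_0\leq B$). Consequently $r_{k_0+T_0}\leq c^{\,T_0}B$, and it suffices to force $c^{\,T_0}B\leq\epsilon$. Taking logarithms and dividing by $\log c<0$ reverses the inequality and yields $T_0\geq \log(B\epsilon^{-1})/|\log c|$, which is precisely the bound \eqref{eq:32}.

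The point requiring genuine care is the well-definedness of the logarithm manipulation, namely that $c=1-\frac{\mu\alpha_{\min}}{2}\in(0,1)$. That $c<1$ is clear since $\mu,\alpha_{\min}>0$. For $c\geq 0$ I would observe that applying the descent inequality \eqref{eq:1001} at the point $\exp_{x}\!\left(-\tfrac{1}{L}\grad f(x)\right)$ gives $f(x)-f^*\geq\frac{1}{2L}\|\grad f(x)\|^2$, which together with A5 forces $\mu\leq 2L$; combined with $\alpha_{\min}\leq 1/L$ from \eqref{eq:18} this gives $\mu\alpha_{\min}\leq 2$, so $c\geq 0$. The absolute values in \eqref{eq:32} also absorb the degenerate regime $B\leq\epsilon$: there $r_{k_0}\leq B\leq\epsilon$ already, and the monotonicity of $\{r_k\}_{k\geq k_0}$ established above closes the argument for any admissible $T_0$.
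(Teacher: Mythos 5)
Your proof is correct and follows essentially the same route as the paper's: monotonicity of $\beta_k$ gives $\alpha_k\le 1/L$ for $k\ge k_0$, Lemma~\ref{le:rlc}~(ii) combined with A5 and the bound \eqref{eq:18} yields the contraction $f(x_{k+1})-f^{*}\le\left(1-\tfrac{\mu\alpha_{\min}}{2}\right)(f(x_{k})-f^{*})$ for $k\ge k_0$, the initial gap $f(x_{k_0})-f^{*}$ is controlled by \eqref{eq:extra1.1}, and the threshold \eqref{eq:32} follows by the logarithm manipulation (your direct version and the paper's contradiction version are equivalent). The one local difference is the justification that $1-\tfrac{\mu\alpha_{\min}}{2}\in(0,1)$: the paper reads it off the decrease inequality itself, whereas you derive $\mu\le 2L$ by comparing A5 against the descent bound $f(x)-f^{*}\ge\tfrac{1}{2L}\|\grad f(x)\|^{2}$ --- a nice self-contained argument, but as written it only yields the factor $\ge 0$, while the logarithm in \eqref{eq:32} requires it to be strictly positive; this is immediately repaired by observing that the denominator in \eqref{eq:18} strictly exceeds $L$ (its last term is positive since $\grad f(x_{0})\ne 0$ whenever the algorithm runs), so $\alpha_{\min}<1/L$ and hence $\mu\alpha_{\min}<2$.
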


\begin{proof}
By (\ref{eq:31}), (\ref{eq:r2}), and \eqref{eq:r1}, we have
$\alpha_{k}=\eta/\sqrt{ \beta_{k+1} }  \leq 1/L$ for all $k\geq k_{0}.$
Thus, by Lemma \ref{le:rlc} (ii), A5, and \eqref{eq:18}, we have
\begin{equation}\label{eq:proof.lem.PL.1}
f(x_{k}) - f(x_{k+1}) \;\geq\; \frac{\alpha_{k}}{2}\,\|\grad  f(x_{k})\|^{2} 
\;\geq\; \frac{\mu \alpha_{\min}}{2}\,\big(f(x_{k}) - f^{*} \big),\qquad \forall k\geq k_{0}.
\end{equation}
From this, it follows that
\begin{equation}\label{eq:term.btw.01}
1-\frac{\mu\alpha_{\min}}{2} \in (0,1).
\end{equation}
Furthermore, \eqref{eq:proof.lem.PL.1} implies that 
$$
f(x_{k+1})-f^{*} 
\leq  \left(1-\frac{\mu\alpha_{\min}}{2}\right) \left( f(x_{k})-f^{*} \right),\qquad \forall k\geq k_{0}.
$$
Hence,
\begin{equation}\label{eq:34}
f(x_{k+1})-f^{*} 
\leq \left(1-\frac{\mu\alpha_{\min}}{2}\right)^{k-k_0+1}\left( f(x_{k_0})-f^{*} \right),\qquad \forall k\geq k_{0}. 
\end{equation}
If $k_{0}\geq 1$, it follows from inequality \eqref{eq:extra1.1} in Lemma \ref{le:rlc} and from \eqref{eq:31} that
\begin{equation}
f(x_{k_{0}})-f^{*} \leq f(x_{0})-f^{*}+\dfrac{\eta^{4}L^{3}}{2\|\grad f(x_{0})\|^{2}}.
\label{eq:35}
\end{equation}
Clearly (\ref{eq:35}) is also true when $k_{0}=0$. If
$T_0 = 0$, then it follows from \eqref{eq:32} and \eqref{eq:35} that \eqref{eq:33} is true. Now consider the case $T_0 \geq 1$. By combining (\ref{eq:34}) with $k= k_0+T_0-1$ and (\ref{eq:35}), we obtain
\begin{equation}
f(x_{k_{0}+T_0})-f^{*} \leq\left(1-\frac{\mu\alpha_{\min}}{2}\right)^{T_0}\left(f(x_{0})-f^{*}+\dfrac{\eta^{4}L^{3}}{2\|\grad f(x_{0})\|^{2}}\right).
\label{eq:36}
\end{equation}
Thus, it follows from (\ref{eq:32}), \eqref{eq:term.btw.01} and (\ref{eq:36}) that (\ref{eq:33}) holds. Indeed, otherwise, we would have  
\begin{equation*}
\epsilon < \left(1-\frac{\mu\alpha_{\min}}{2}\right)^{T_0} \left(f(x_{0}) - f^{*} + \dfrac{\eta^{4}L^{3}}{2\|\grad f(x_{0})\|^{2}}\right),
\end{equation*}
which, by \eqref{eq:term.btw.01} and the properties of the logarithm, leads to 
$$
T_0 \left|\log\left(1-\frac{\mu\alpha_{\min}}{2}\right) \right|  
< \log\left(\left(f(x_{0}) - f^{*} + \dfrac{\eta^{4}L^{3}}{2\|\grad f(x_{0})\|^{2}}\right) \epsilon^{-1} \right),
$$
contradicting \eqref{eq:32}.
\end{proof}

\begin{theorem}
\label{thm:3}
Suppose that A1 and A5 hold  and let $\left\{x_{k}\right\}_{k\geq 0}$ be generated by Algorithm 1. For each $\epsilon>0$, define
$
T_{f}(\epsilon) = \inf \left\{ k \in \mathbb{N} : f(x_{k}) - f^{*} \leq \epsilon \right\}.
$
If $\| \grad f(x_{0})\|\geq\eta L$, then
\begin{equation}
T_{f}(\epsilon)< 1 + \frac{\left| \log\left(\left[f(x_{0})-f^{*}+\frac{\eta^{4}L^{3}}{2\|\grad f(x_{0})\|^{2}}\right]\epsilon^{-1}\right) \right|}{\left| \log\left(1-\frac{\mu\alpha_{\min}}{2}\right) \right|},
\label{eq:37}
\end{equation}
where $\alpha_{\min}$ is defined in (\ref{eq:18}). Otherwise, 
\begin{eqnarray}
T_{f}(\epsilon)&<& 1+\left[ \frac{\eta^{2}L^{2}}{\mu} \epsilon^{-1} +1\right]\log\left(\frac{\eta^{2}L^{2}}{\|\grad f(x_{0})\|^{2}}\right) \nonumber\\
& &+ 
\frac{\left| \log\left(\left[f(x_{0})-f^{*}+\frac{\eta^{4}L^{3}}{2\|\grad f(x_{0})\|^{2}}\right]\epsilon^{-1}\right) \right|}{\left| \log\left(1-\frac{\mu\alpha_{\min}}{2}\right) \right|}.
\label{eq:38}
\end{eqnarray}
\end{theorem}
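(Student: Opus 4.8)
The plan is to exploit the dichotomy already isolated in Lemmas~\ref{lem:6} and~\ref{lem:7}: the algorithm first passes through a ``warm-up'' phase in which the accumulated scale $\beta_{k+1}$ is still below $\eta^{2}L^{2}$ (so that the steplengths $\alpha_{k}$ are large), and then enters a ``linear'' phase once $\beta_{k+1}\geq\eta^{2}L^{2}$. The index separating the two regimes is $k_{0}=\inf\{k\in\mathbb{N}\colon\beta_{k+1}\geq\eta^{2}L^{2}\}$. First I would record that A5 subsumes A2, so that $\alpha_{\min}$ in~\eqref{eq:18} is well defined and every earlier lemma is available. The crucial elementary observation is that, by~\eqref{eq:r1} and $\beta_{0}=0$, one has $\beta_{1}=\|\grad f(x_{0})\|^{2}$; hence the hypothesis $\|\grad f(x_{0})\|\geq\eta L$ is equivalent to $\beta_{1}\geq\eta^{2}L^{2}$, i.e.\ to $k_{0}=0$, whereas $\|\grad f(x_{0})\|<\eta L$ forces $k_{0}\geq1$ (or $k_{0}=+\infty$). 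This is exactly the split between~\eqref{eq:37} and~\eqref{eq:38}.

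For the first bound~\eqref{eq:37} I would simply invoke Lemma~\ref{lem:7} with $k_{0}=0$: choosing $T_{0}$ to be the smallest integer satisfying~\eqref{eq:32} yields $f(x_{T_{0}})-f^{*}\leq\epsilon$, hence $T_{f}(\epsilon)\leq T_{0}$. Writing $T_{0}^{*}$ for the right-hand side of~\eqref{eq:32} and using the elementary estimate $\lceil x\rceil<x+1$, we obtain $T_{f}(\epsilon)\leq\lceil T_{0}^{*}\rceil<1+T_{0}^{*}$, which is precisely~\eqref{eq:37}.

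For the second bound~\eqref{eq:38} (the case $\|\grad f(x_{0})\|<\eta L$) I would write $T_{1}$ for the right-hand side of~\eqref{eq:26} and distinguish whether $k_{0}=+\infty$ or $k_{0}<+\infty$. If $k_{0}=+\infty$, then $\beta_{k+1}<\eta^{2}L^{2}$ for every $k$, and a contradiction argument against Lemma~\ref{lem:6} (if $T_{f}(\epsilon)\geq T_{1}$, then the minimal objective gap over the first $T_{f}(\epsilon)$ iterates would already be $\leq\epsilon$) gives $T_{f}(\epsilon)<T_{1}$, which is dominated by the right-hand side of~\eqref{eq:38}. If $1\leq k_{0}<+\infty$, I would branch on whether the target is met before iteration $k_{0}$. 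If it is, the same contrapositive use of Lemma~\ref{lem:6} with $T=T_{f}(\epsilon)$ (the warm-up condition~\eqref{eq:25.1} holding because $T_{f}(\epsilon)\leq k_{0}$) again yields $T_{f}(\epsilon)<T_{1}$. If it is not, then $f(x_{k})-f^{*}>\epsilon$ for $k=0,\dots,k_{0}-1$, so applying the contrapositive of Lemma~\ref{lem:6} with $T=k_{0}$ bounds the warm-up length by $k_{0}<T_{1}$; Lemma~\ref{lem:7} then supplies $T_{0}=\lceil T_{0}^{*}\rceil$ additional steps with $f(x_{k_{0}+T_{0}})-f^{*}\leq\epsilon$, so that $T_{f}(\epsilon)\leq k_{0}+T_{0}$. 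Summing the two regime bounds and again estimating the ceiling gives the right-hand side of~\eqref{eq:38}.

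The main obstacle is the bookkeeping at the transition index $k_{0}$: one must convert Lemma~\ref{lem:6}, whose conclusion is a statement about the minimum gap over a \emph{window} of iterates, into an upper bound on the warm-up length $k_{0}$ (done through its contrapositive), and then glue this to the geometric decay of Lemma~\ref{lem:7} without double-counting the iteration $k_{0}$ itself. The strict inequalities in~\eqref{eq:37}--\eqref{eq:38} hinge on the strict bound $k_{0}<T_{1}$ together with the estimate $\lceil x\rceil<x+1$ for the number of linear-phase steps; keeping these strict throughout, and checking the degenerate values $T_{f}(\epsilon)=0$, $T_{0}=0$, and $k_{0}=0$, is where the care is required rather than in any hard estimate.
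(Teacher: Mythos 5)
Your overall architecture (warm-up phase versus linear phase split at $k_{0}$, contrapositive of Lemma~\ref{lem:6} for the warm-up, Lemma~\ref{lem:7} for the geometric decay) is the same as the paper's, and your treatment of \eqref{eq:37}, of the case $k_{0}=+\infty$, and of the subcase $T_{f}(\epsilon)\leq k_{0}$ is correct; in particular, your direct application of Lemma~\ref{lem:7} with $T_{0}=\lceil T_{0}^{*}\rceil$ together with $\lceil x\rceil<x+1$ is a clean substitute for the paper's contrapositive argument for \eqref{eq:37}.

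The gap is in the last subcase ($1\leq k_{0}<T_{f}(\epsilon)<+\infty$), where your bookkeeping does not reach \eqref{eq:38}. Write $T_{1}$ for the right-hand side of \eqref{eq:26} and $T_{0}^{*}$ for the right-hand side of \eqref{eq:32}, so that the right-hand side of \eqref{eq:38} equals $T_{1}+T_{0}^{*}$. Your two ingredients are $k_{0}<T_{1}$ (Lemma~\ref{lem:6} applied with $T=k_{0}$) and $T_{f}(\epsilon)\leq k_{0}+\lceil T_{0}^{*}\rceil$ (Lemma~\ref{lem:7}); summing them yields only $T_{f}(\epsilon)<T_{1}+T_{0}^{*}+1$, which exceeds \eqref{eq:38} by an additive $1$. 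The extra unit cannot be absorbed: $k_{0}<T_{1}$ with $k_{0}$ an integer does not give $k_{0}\leq T_{1}-1$, since $T_{1}$ is not an integer in general. For instance, if $T_{1}=5.5$ and $T_{0}^{*}=0.2$, your chain allows $k_{0}=5$ and $\lceil T_{0}^{*}\rceil=1$, hence $T_{f}(\epsilon)\leq 6$, whereas \eqref{eq:38} asserts $T_{f}(\epsilon)<5.7$. The paper closes exactly this gap by proving the strictly sharper warm-up estimate \eqref{eq:44}, namely $k_{0}<T_{1}-1$, obtained by invoking the contrapositive of Lemma~\ref{lem:6} with $T=k_{0}+1$ (not $T=k_{0}$), which exploits the additional fact that $f(x_{k_{0}})-f^{*}>\epsilon$ because $T_{f}(\epsilon)>k_{0}$. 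Be aware, however, that this step of the paper is itself delicate: hypothesis \eqref{eq:25.1} with $T=k_{0}+1$ demands $\beta_{k_{0}+1}<\eta^{2}L^{2}$, which contradicts the very definition of $k_{0}$, so the choice $T=k_{0}+1$ needs justification beyond a literal citation of Lemma~\ref{lem:6}; your conservative choice $T=k_{0}$ is the logically safe one, but it provably yields only the weaker bound. In short: your proposal establishes \eqref{eq:37}, but for \eqref{eq:38} it establishes the bound with an extra $+1$, and the missing ingredient is precisely the sharper estimate $k_{0}<T_{1}-1$ on the warm-up length.
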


\begin{proof}
First, let us consider the case $\|\grad f(x_{0})\| \geq \eta L$. 
Then, it follows from
\eqref{eq:r1} with $k=0$ that  
$\beta_1 = \|\grad f(x_{0})\|^2 \;\geq\; \eta^2 L^2$, which guarantees the equality 
\begin{equation}
k_{0} \;:=\; \inf\left\{ k \in \mathbb{N} \;:\; \beta_{k+1} \geq \eta^{2}L^{2} \right\} = 0.
\label{eq:39}
\end{equation}
If $T_{f}(\epsilon)=0$, then (\ref{eq:37}) holds. Therefore, suppose that $T_{f}(\epsilon)\geq 1$. In this case, by the definition of $T_{f}(\epsilon)$, we have
\begin{equation}
f(x_{T_{f}(\epsilon)-1})- f^{*}>\epsilon.
\label{eq:40}
\end{equation}
Thus, in view of (\ref{eq:39}) and (\ref{eq:40}), it follows from the contrapositive of Lemma \ref{lem:7} that we must have
\begin{equation*}
T_{f}(\epsilon)-1< \frac{\left| \log\left(\left[f(x_{0})-f^{*}+\frac{\eta^{4}L^{3}}{2\|\grad f(x_{0})\|^{2}}\right]\epsilon^{-1}\right) \right| }{\left| \log\left(1-\frac{\mu\alpha_{\min}}{2}\right) \right|},
\end{equation*} 
which establishes (\ref{eq:37}).

Now, suppose that $\|\grad f(x_{0})\|<\eta L$. If $T_{f}(\epsilon)=0$, then (\ref{eq:38}) holds. So, assume that $T_{f}(\epsilon)\geq 1$. Let us divide the rest of the analysis in two cases.
\\[0.2cm]
\noindent\textbf{Case 1:} $k_{0}:=\inf\left\{k\in\mathbb{N}\,:\,\beta_{k+1}\geq\eta^{2}L^{2}\right\}=+\infty$.
\\[0.2cm]
\noindent In this case, in particular, we have
\begin{equation}
\beta_{k+1}<\eta^{2}L^{2},\quad\text{for}\,\,k=0,\ldots,T_{f}(\epsilon)-1.
\label{eq:41}
\end{equation}
Moreover, by the definition of $T_{f}(\epsilon)$ we also have
\begin{equation}
\min\{ f(x_{k})-f^{*} \colon k =0,1,\ldots, T_{f}(\epsilon)-1 \} >\epsilon.
\label{eq:42}
\end{equation}
Thus, in view of (\ref{eq:41}) and (\ref{eq:42}), it follows from that contrapositive of Lemma \ref{lem:6} that we must have
\begin{equation*}
T_{f}(\epsilon)<1+\left[ \frac{\eta^{2}L^{2}}{\mu} \epsilon^{-1}+1\right]\log\left(\frac{\eta^{2}L^{2}}{\|\grad f(x_{0})\|^{2}}\right).
\end{equation*}
Therefore, (\ref{eq:38}) is true in this case.
\\[0.2cm]
\noindent\textbf{Case 2:} $k_{0} :=\inf\left\{k\in\mathbb{N}\,:\,\beta_{k+1}\geq\eta^{2}L^{2}\right\}<+\infty$.
\\[0.2cm]
\noindent 
From the definition of $k_{0}$ we have
\begin{equation}
\beta_{k+1}<\eta^{2}L^{2},\quad\text{for}\,\,k=0,\ldots,k_{0}-1.
\label{eq:43}
\end{equation}
Regarding the relation between $T_{f}(\epsilon)$ and $k_{0}$, there are only two possibilities.
\\[0.2cm]
\noindent\textbf{Subcase 2.1:} $T_{f}(\epsilon)\leq k_{0}$.
\\[0.2cm]
\noindent In this subcase, by (\ref{eq:43}) we have
\begin{equation*}
\beta_{k+1}<\eta^{2}L^{2},\quad\text{for}\,\,k=0,\ldots,T_{f}(\epsilon)-1.
\end{equation*}
Therefore, as in Case 1, we conclude that (\ref{eq:38}) is true.
\\[0.2cm]
\noindent\textbf{Subcase 2.2:} $T_{f}(\epsilon)=k_{0}+T_0$ for some $T_0\geq 1$.
\\[0.2cm]
In this case, in addition to (\ref{eq:43}), we also have
\begin{equation*}
\min\{ f(x_{k})-f^{*} \colon k =0,1,\ldots, k_0 \} \geq \min\{ f(x_{k})-f^{*} \colon k =0,1,\ldots, T_{f}(\epsilon)-1 \} >\epsilon.
\end{equation*}
Thus, by the contrapositive of Lemma \ref{lem:6} com $T=k_0+1$, it follows that
\begin{equation}
k_{0}<\left[ \frac{\eta^{2}L^{2}}{\mu} \epsilon^{-1}+1\right]\log\left(\frac{\eta^{2}L^{2}}{\|\grad f(x_{0})\|^{2}}\right).
\label{eq:44}
\end{equation}
If $T_0=1$, then it follows from (\ref{eq:44}) that
\begin{equation*}
T_{f}(\epsilon)=k_{0}+T_0<1+\left[ \frac{\eta^{2}L^{2}}{\mu} \epsilon^{-1}+1\right]\log\left(\frac{\eta^{2}L^{2}}{\|\grad f(x_{0})\|^{2}}\right),
\end{equation*}
and so (\ref{eq:38}) is true. On the other hand, if $T_0 \geq 2$, then
\begin{equation*}
f(x_{k_{0}+T_0-1})-f^{*}=f(x_{T_{f}(\epsilon)-1})-f^{*}>\epsilon.
\end{equation*}
Thus, by the contrapositive of Lemma \ref{lem:7} we must have
\begin{equation}
T_0-1< \frac{\left| \log\left(\left[f(x_{0})-f^{*}+\frac{\eta^{4}L^{3}}{2\|\grad f(x_{0})\|^{2}}\right]\epsilon^{-1}\right) \right|}{\left| \log\left(1-\frac{\mu\alpha_{\min}}{2}\right) \right|}.
\label{eq:45}
\end{equation}
By combining (\ref{eq:44}) and (\ref{eq:45}) with the fact that 
$T_{f}(\epsilon) = k_{0} + T_0$, we conclude that (\ref{eq:38}) also holds in this subcase.
\end{proof}

\subsubsection{A Class of Nonconvex PL functions on SPD matrices}

In this section, we provide a class of nonconvex functions that satisfy Assumption A5 on a particular Hadamard manifold. Let \( \mathbb{R}^{n\times n} \) be the set of real matrices of order \( n \times n \), \( \mathbb{P}^n \subset \mathbb{R}^{n\times n}  \) the set of symmetric matrices, and \( \mathbb{P}^n_{++} \subset \mathbb{R}^{n\times n}  \) the cone of symmetric positive definite matrices. Define
\begin{equation} \label{eq:metric}
\langle U, V \rangle_{X} := \operatorname{tr}(V X^{-1} U X^{-1}), \quad X \in \mathbb{P}^n_{++}, \; U, V \in \mathbb{P}^n,
\end{equation}
where \( \operatorname{tr}(\cdot) \) denotes the trace operator. It is well known that 
\( M = (\mathbb{P}^n_{++}, \langle \cdot, \cdot \rangle) \) 
is a Hadamard manifold (see, for example,~\cite[Theorem 1.2, Page 325]{Lang1999}), and that 
\( T_X M \) can be identified with \( \mathbb{P}^n \) for every \( X \in M \). The Riemannian gradient and Riemannian Hessian of \( f : \mathbb{P}_{++}^n \to \mathbb{R} \) are given, respectively, by
\begin{align} 
\mbox{grad} f(X)&=Xf'(X)X, \label{eq:Grad}\\
\mbox{hess}\,f(X)V&=Xf''(X)VX+\frac{1}{2}\left[  Vf'(X)X+  Xf'(X)V \right] \label{eq:Hess},
\end{align}
where \( V \in T_XM \), and \( f'(X) \) and \( f''(X) \) denote the Euclidean gradient and Hessian of \( f \) at \( X \), respectively, with respect to the Frobenius metric.


Consider the class of functions \( f : \mathbb{P}_{++}^n \to \mathbb{R} \) defined by
\begin{equation}  \label{eq:fpdm4}
f(X) = a \ln^4(\det(X)) - b \ln^3(\det(X)) - \frac{b^3}{a^2} \ln(\det(X)),
\end{equation}
where $a,b>0$.
Since
\begin{equation}\label{eq:Grad.eucli}
f'(X) = \left[4a\ln^3(\det(X)) - 3b\ln^2(\det(X)) - \frac{b^3}{a^2} \right] X^{-1},
\end{equation}
it follows from \eqref{eq:Grad} that
\begin{equation}\label{eq:Grad.riem.ex}
\grad f(X) = \left[4a\ln^3(\det(X)) - 3b\ln^2(\det(X)) - \frac{b^3}{a^2} \right] X.
\end{equation}
Therefore, the set of critical points of \( f \) is
$
\Omega \equiv \left\{ X \in {\mathbb P}^n_{++} \colon \det(X) = e^{b/a} \right\}.
$
Moreover, \eqref{eq:fpdm4} implies that \( f(X) = - b^4/a^3 \) for all \( X \in \Omega \). Given this and the coercivity of \( f \), we conclude that
$f^{*} = - b^4/a^3$. Consequently, using \eqref{eq:metric}, \eqref{eq:fpdm4}, and \eqref{eq:Grad.riem.ex}, along with appropriate algebraic manipulations, we obtain
\small
\begin{eqnarray*}
\frac{\| \grad f(X) \|^{2} }{f(X) - f^{*}} 
& = & \frac{ \left[ 4a\ln^3(\det(X)) - 3b\ln^2(\det(X)) - b^3/a^2 \right]^2n   }{ a \ln^4(\det(X)) 
- b \ln^3\left(\det(X)\right)-  (b^3/a^2)\ln\left(\det(X)\right)   + (b^4/a^3) } \\
& = &\frac{ \left[ \left[ \ln(\det(X)) -b/a \right] \left[ 4a\ln^2(\det(X)) + b\ln(\det(X)) + b^2/a \right] \right]^2 n  }
{ \left[ \ln(\det(X)) - b/a \right]^2 \left[ a\ln^2(\det(X))+b\ln\left(\det(X)\right)+b^2/a \right] } \\
& = & \frac{ 9na^2\ln^4(\det(X))   }{ a \ln^2(\det(X)) + b \ln\left(\det(X)\right)+  b^2/a } \\
& &+ n\left( 7a\ln^2(\det(X)) +b\ln(\det(X)) + b^2/a \right)  \\
&\geq& n\left( 7a\ln^2(\det(X)) +b\ln(\det(X)) + b^2/a \right) \geq (27nb^2)/(28a)
\end{eqnarray*}
\normalsize
for all \( X \in {\mathbb P}^n_{++} \setminus \Omega \), which shows that the function \( f \) defined in \eqref{eq:fpdm4} satisfies A5 for every \( 0 < \mu \leq (27nb^2)/(28a) \).
On the other hand, denoting the Euclidean norm by \( \| \cdot \|_e \), it follows from \eqref{eq:Grad.eucli} that
\small
\begin{multline*}
\frac{\|  f'(X) \|_{e}^{2} }{f(X) - f^{*}}  \\
= \left[ \frac{ 9a^2\ln^4(\det(X))   }{ a \ln^2(\det(X)) + b \ln\left(\det(X)\right)+  b^2/a } + 7a\ln^2(\det(X)) +b\ln(\det(X)) + b^2/a \right] \operatorname{tr}\left( X^{-2}\right), 
\end{multline*}
\normalsize
and, denoting by \( I_n \) the \( n \times n \) identity matrix, we obtain
\small
\begin{align*}
\inf_{X \in {\mathbb P}^n_{++} }\frac{\|  f'(X) \|_{e}^{2} }{f(X) - f^{*}} 
& = \lim_{t\to +\infty}  \frac{\|  f'(tI_n) \|_{e}^{2} }{f(tI_n) - f^{*}} \\
& =  \lim_{t\to +\infty} \left[ \frac{ 9a^2\ln^4( t^n )   }{ a \ln^2(t^n) + b \ln\left(t^n\right)+  b^2/a } + 7a\ln^2(t^n) +b\ln(t^n) + b^2/a \right] \frac{n}{t^2} \\
&= 0,
\end{align*}
\normalsize
which implies that there exists no \( \mu > 0 \) such that the function \( f \) defined in \eqref{eq:fpdm4} satisfies A5 in the Euclidean setting.

Now, observe that
\begin{multline*}
f''(X)V = \left[12a \ln^2(\det(X)) - 6b \ln(\det(X)) \right] \operatorname{tr}(X^{-1}V) X^{-1} \\
- \left[4a \ln^3(\det(X)) - 3b \ln^2(\det(X)) - \frac{b^3}{a^2} \right] X^{-1} V X^{-1},
\end{multline*}
for all \( X \in \mathbb{P}_{++}^n \) and \( V \in \mathbb{P}^n \). By combining this equality with \eqref{eq:Grad.eucli} and \eqref{eq:Hess}, we obtain
\[
\hess  f(X)V = \left[12a \ln^2(\det(X)) - 6b \ln(\det(X)) \right] \operatorname{tr}(X^{-1}V) X,
\]
for all \( X \in \mathbb{P}_{++}^n \) and \( V \in \mathbb{P}^n \), which implies that
$$
\langle \hess f(X)V, V \rangle = -3b^2/(4a) \|V\|^2 < 0
$$
for all \( X \in \left\{ X \in \mathbb{P}_{++}^n \colon \det(X) = e^{b/(4a)} \right\} \) and \( V \in \mathbb{P}^n \). Therefore,  \( f \) is not convex.

\section{Numerical Results}
\label{sec:numerics}

We evaluated the relative performance of Algorithm 1 by testing its Matlab implementation with $\eta=10$ (MAdaGrad) against the Riemannian Gradient Method with Armijo line search \cite{FerreiraLouzeiroPrudente2019} and the RWNGrad \cite{grapiglia2023adaptive}. The experiments were conducted on two classes of test problems. 

All implementations were performed in Matlab R2022a on a MacBook Pro equipped with an Apple M1 Pro processor and 16~GB of RAM. To ensure reproducibility, we fixed the randomness using Matlab's built-in function \texttt{rng(2025)}.

\subsection{Problem Class 1}
We considered class of problems of the form 
\begin{equation}
\min_{X\in\mathbb{P}_{++}^{n}}\,f\left(X\right)\equiv \ln\left(\det(X)\right)^{2}-\ln\left(\det(X)\right).
\label{eq:5.1}
\end{equation}
For $n=10$, the codes were run from 100 starting points, randomly generated with eigenvalues in the interval $(0,20)$. Following \cite{FerreiraLouzeiroPrudente2019,grapiglia2023adaptive}, each starting point $X_{0}$ was constructed as $X_{0}=Q^{T}\Gamma Q$, where $\Gamma$ is a diagonal matrix whose entries are independent random variables uniformly distributed in $(0,20)$, and $Q$ is obtained from the QR decomposition of a matrix with entries uniformly generated in $(0,1)$. Figure~\ref{fig:1} shows the performance profiles \cite{Dolan} with respect to CPU time for finding $X$ such that $\|\operatorname{grad} f(X)\|\leq 10^{-4}$, with each code allowed a maximum of $1,000$ iterations. As can be seen, MAdaGrad is significantly faster than the other two methods.
\newpage
\begin{figure}[h]
    \centering
    \includegraphics[width=0.5\textwidth]{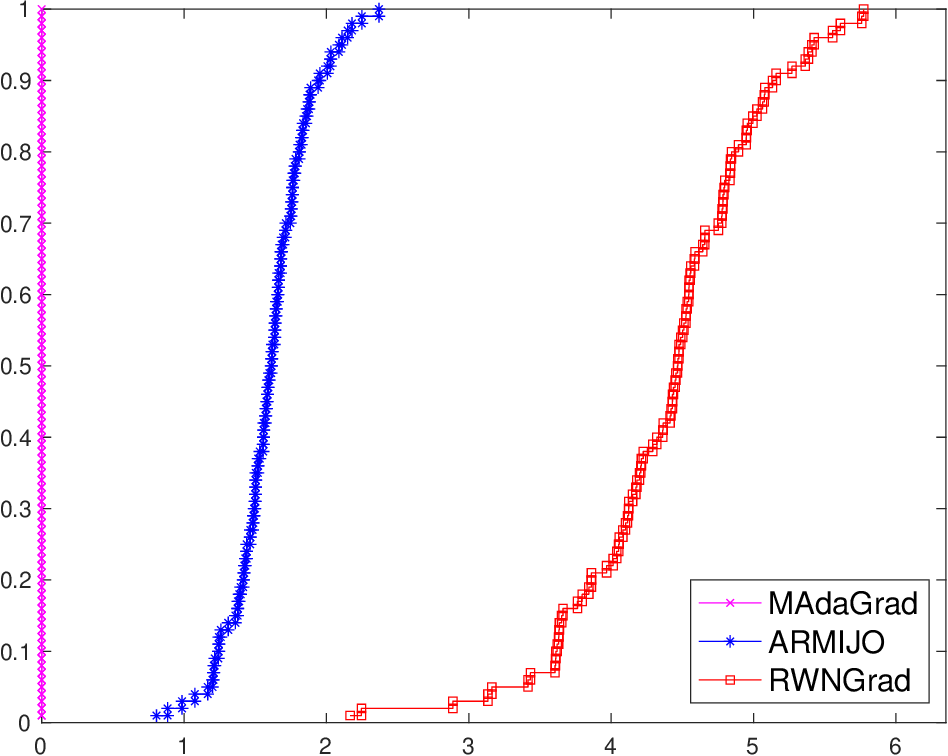}
    \caption{The Performance profiles (in $\log_{2}$ scale) with respect to CPU time for Problem 1. The magenta line corresponds to MAdaGrad, the blue line to ARMIJO, and the red line to RWNGrad.}
    \label{fig:1}
\end{figure}

\subsection{Problem Class 2}

We also considered the class of problems of the form
\begin{equation}
\min_{X\in\mathbb{P}_{++}^{n}}\, f(X)\equiv \frac{1}{2}\sum_{j=1}^{m}\left\|\ln\!\left(X^{-1/2}A_{j}X^{-1/2}\right)\right\|_{F}^{2},
\label{eq:5.2}
\end{equation}
for fixed $A_{1},\ldots,A_{m}\in\mathbb{P}_{++}^{n}$. For $n=20$ and $m=5$, we ran 100 test problems generated by randomly constructing 100 sets of matrices $A_{1},\ldots,A_{m}\in\mathbb{P}_{++}^{n}$. As in \cite{FerreiraLouzeiroPrudente2019,grapiglia2023adaptive}, each matrix $A_{j}$ was constructed as $A_{j}=Q_{j}^{T}\Lambda_{j}Q_{j}$, where $\Lambda_{j}$ is a diagonal matrix whose entries are independent random variables uniformly distributed in $(0,20)$, and $Q_{j}$ is obtained from the QR decomposition of a matrix with entries uniformly generated in $(0,1)$. For each problem, the initial point $X_{0}$ was chosen as
\begin{equation*}
    X_{0}=\exp\!\left(\frac{1}{m}\sum_{j=1}^{m}\ln(A_{j})\right).
\end{equation*}
Figure~\ref{fig:2} shows the performance profiles \cite{Dolan} with respect to CPU time for finding $X$ such that $\|\operatorname{grad} f(X)\|\leq 10^{-4}$, with each code allowed a maximum of $1,000$ iterations. Once again, our method MAdaGrad is considerably faster than the other two methods.
\newpage
\begin{figure}[h]
    \centering
    \includegraphics[width=0.5\textwidth]{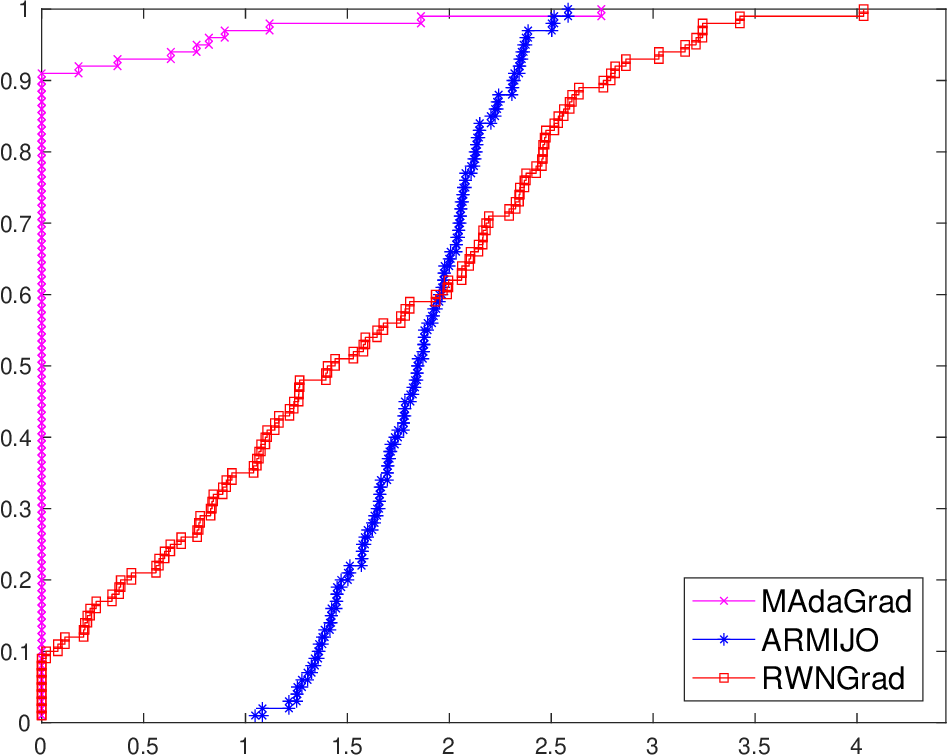}
    \caption{The Performance profiles (in $\log_{2}$ scale) with respect to CPU time for Problem 2. The magenta line corresponds to MAdaGrad, the blue line to ARMIJO, and the red line to RWNGrad.}
    \label{fig:2}
\end{figure}

\section{Conclusion}

In this paper, we have introduced \textsc{MAdaGrad}, a novel generalization of AdaGrad-Norm to Riemannian optimization. We established iteration complexity guarantees in several regimes: $\mathcal{O}(\varepsilon^{-2})$ for finding $\varepsilon$-stationary points under Lipschitz continuous Riemannian gradients; $\mathcal{O}(\varepsilon^{-1})$ for geodesically convex objectives on manifolds with sectional curvature bounded from below; and $\mathcal{O}(\log(\varepsilon^{-1}))$ under a global Polyak--\L{}ojasiewicz condition. Furthermore, we constructed nonconvex functions on the manifold $\mathbb{P}_{++}^{n}$ of symmetric positive definite matrices that satisfy the PL condition. Numerical experiments confirmed the efficiency of \textsc{MAdaGrad}, showing consistent improvements over Riemannian Steepest Descent with Armijo line-search \cite{FerreiraLouzeiroPrudente2019} and the RWNGrad method \cite{grapiglia2023adaptive} on optimization problems over $\mathbb{P}_{++}^{n}$.


\def\cprime{$'$} \def\cprime{$'$} \def\cprime{$'$}

%
%
%
%
%
%
%
\end{document}